\newtheorem{introthm}{Theorem}
\newtheorem{thm}{Theorem}[section]
\newtheorem{lem}[thm]{Lemma}
\newtheorem{prop}[thm]{Proposition}
\newtheorem{cor}[thm]{Corollary}
\theoremstyle{definition}
\newtheorem{defn}[thm]{Definition}
\theoremstyle{remark}
\newtheorem{rem}[thm]{Remark}
\numberwithin{equation}{section}
\newcommand{\bB}{{\mathbb B}}
\newcommand{\bC}{{\mathbb C}}
\newcommand{\bF}{{\mathbb F}}
\newcommand{\bK}{{\mathbb K}}
\newcommand{\bN}{{\mathbb N}}
\newcommand{\bR}{{\mathbb R}}
\newcommand{\bX}{{\mathbb X}}
\newcommand{\bZ}{{\mathbb Z}}
\newcommand{\cH}{{\mathcal H}}
\newcommand{\cK}{{\mathcal K}}
\newcommand{\cL}{{\mathcal L}}
\newcommand{\cU}{{\mathcal U}}
\DeclareMathOperator{\id}{id}
\DeclareMathOperator{\sgn}{sgn}
\newcommand{\ip}[1]{\langle #1 \rangle}
\newcommand{\ee}{\varepsilon}
\begin{document}

\title{Strict comparison in reduced group $C^*$-algebras}

\author[T.Amrutam]{Tattwamasi Amrutam}
\address{\parbox{\linewidth}{Institute of Mathematics of the Polish Academy of Sciences, Ul. S'niadeckich 8,\\ 00-656 Warszawa, Poland}}
\email{tattwamasiamrutam@gmail.com}
\urladdr{https://tattwamasiamrutam.com}

\author[D. Gao]{David Gao}
\address{\parbox{\linewidth}{Department of Mathematics, University of California, San Diego, \\
9500 Gilman Drive \# 0112, La Jolla, CA 92093}}
\email{weg002@ucsd.edu}
\urladdr{https://sites.google.com/ucsd.edu/david-gao}

\author[S. Kunnawalkam Elayavalli]{Srivatsav Kunnawalkam Elayavalli}
\address{\parbox{\linewidth}{Department of Mathematics, University of California, San Diego, \\
9500 Gilman Drive \# 0112, La Jolla, CA 92093}}
\email{skunnawalkamelayaval@ucsd.edu}
\urladdr{https://sites.google.com/view/srivatsavke}

\author[G. Patchell]{Gregory Patchell}
\address{\parbox{\linewidth}{Department of Mathematics, University of California, San Diego, \\
9500 Gilman Drive \# 0112, La Jolla, CA 92093}}
\email{gpatchel@ucsd.edu}
\urladdr{https://sites.google.com/view/gpatchel}

\begin{abstract}
We prove that for every $n\in \mathbb{N}$ such that $n\geq 2$, the reduced group $C^*$-algebras of the countable free groups  $C^*_r(\mathbb{F}_n)$ have strict comparison. Our method works in a general setting: for every finitely generated acylindrically hyperbolic group $G$ with trivial finite radical and the rapid decay property, we have $C^*_r(G)$ have strict comparison.  This work also has several applications in the theory of $C^*$-algebras including: resolving Leonel Robert's selflessness problem for $C^*_r(G)$; uniqueness of embeddings of the Jiang-Su algebra $\mathcal{Z}$ up to approximate unitary equivalence into $C^*_r(G)$; full computations of the Cuntz semigroup of $C^*_r(G)$ and future directions in the $C^*$-classification program.  
\end{abstract}

\maketitle%
\begin{center}
    \emph{In memory of  Dr. S. Balachander.}
\end{center}

\section{Introduction}

\subsection{Strict comparison}
The subspaces of a finite dimensional Hilbert space $\cH$ can be compared and classified by their dimension; algebraically, the dimension of a subspace $\cK\subset \cH$ is the trace of the orthogonal projection onto $\cK$. For matrices $x,y$ one has the following natural notion of \emph{comparison}, that is $x\precsim y$ if $x = rys$ for some matrices $r, s$. It turns out here that the \emph{rank} is what precisely implements this comparison, that is  $\mathrm{rank}(x) \leq \mathrm{rank}(y)$ if and only if $x\precsim y$. Murray and von Neumann studied this comparison quite effectively in a broad setting of bounded operators on Hilbert spaces, in what are today called von Neumann algebras. In a broad class of von Neumann algebras known as {II$_1$ factors, they were able to establish a \emph{comparison theorem}, showing that the lattice of projections under the above subequivalence is determined exactly by the unique finite trace \cite{Murray-Neumann36}. Notably, in this context the trace takes on continuous values, that is, for all $t\in [0,1]$, there is a projection $p\in \mathcal{P}(M)$ with $\tau(p)=t$. 

Such a comparison property allows for an algebraic perspective that is useful in operator algebras. Naturally one would want an analogue of the above Murray von Neumann comparison theorem in $C^*$-algebras.  However, the behavior of $C^*$-algebras is very different as evidenced, for instance, by the typical lack of projections. Hence, an analogue of this property is not only subtle to define for all unital simple $C^*$-algebras but also will not hold automatically in all situations like in the setting of II$_1$ factors. The appropriate analogue here is called \emph{strict comparison}, initially considered by Blackadar \cite{Blackadar_1989}. This can be rephrased in a much more algebraic manner in terms of a certain notion of order-completeness (\emph{almost unperforation}) of the important invariant called the Cuntz semigroup (see \cite{Rordam}, and for instance, \cite{gardella2022moderntheorycuntzsemigroups} or paragraph 3.4 of \cite{antoine2024purecalgebras} for more details). 

Let $A$ be a $C^*$-algebra and let $\mathbb{K}$ denote the compact operators on $\ell^2$. If $a,b \in (A \otimes \bK)_+$ are positive elements, we say that $a \precsim b$ (Cuntz subequivalent) if there are $v_n \in A \otimes \bK$ such that $v_n^*bv_n \to a$. Now given $\tau$ a trace on $A$, there is a canonical extension of $\tau$ to $(A \otimes \bK)_+$ (denoted by $\tau$ again). The dimension function here with respect to $\tau$ is defined on $(A\otimes \bK)_+$ by
$d_{\tau}(a):=\lim_n \tau(a^{\frac{1}{n}})$  (note that for positive operators in matrices, $d_\tau$ precisely gives the rank of the operator).
If $a,b\in (A \otimes \bK)_+$ satisfy $a \precsim b$, then $d_{\tau}(a)\le d_{\tau}(b)$. A unital, simple, exact $C^*$-algebra $A$ with unique trace $\tau$ has {strict comparison} if whenever $a,b \in (A \otimes \bK)_+$ are such that $d_\tau(a) < d_\tau(b)$, then $a \precsim b$. Note that many of the $C^*$-algebras considered in the results of our paper will be unital, simple, exact and with unique trace (this includes the reduced $C^*$-algebras of hyperbolic groups \cite{ADAMS1994765}), so that the above definition will suffice. In full generality, the definition will involve the space of quasitraces and is more subtle. 

Despite being such a fundamental and natural property of a $C^*$-algebra, proving strict comparison is difficult and has far-reaching applications (please see subsection \ref{applications section} where several applications arising from our present work are described). Strict comparison occupies an important place as a hypothesis in  Elliott's classification program, which seeks to classify simple, nuclear $C^*$-algebras by $K$-theoretical and tracial data.  This program has seen immense success in recent times and has resulted in a substantial body of work due to several mathematicians (see for instance \cite{GonglinNiu1, GongLinNiu2, uniformgamma, carrión2023classifyinghomomorphismsiunital, winter2017structurenuclearcalgebrasquasidiagonality, white2023abstractclassificationtheoremsamenable}). Note also the influential Toms-Winter conjecture predicts that three regularity properties of very different natures, namely topological (finite nuclear dimension), functional analytic ($\mathcal{Z}$-stability), and algebraic (strict comparison), are equivalent for a separable, unital, simple, {non-elementary}, nuclear $C^*$-algebra $A$. These three seemingly independent regularity properties, which are
conjectured to be equivalent, are exactly the conditions under which one considers a
$C^*$-algebra to belong to the classification program.

\subsection{Context and main results} A prototypical example of a  reduced group $C^*$-algebra is the one arising from {the} countable free group $\mathbb{F}_2$. The study of this $C^*$-algebra has been extremely important for opening up several deep insights in the subject. Some examples of this include the following fundamental results: unique trace and simplicity \cite{Powers} (see also the major works \cite{kennedy, haagerup2016newlookcsimplicityunique}  which were inspired by Powers' insights), lacking non-trivial projections \cite{projectionsPicu}, $K$-theory computations \cite{projectionsPicu}, showing stable rank 1 \cite{stablerankHaagerup} in the sense of Rieffel \cite{rieffelmain}, and showing \emph{strong convergence} matrix models \cite{HTExt} (see also \cite{louder2022strongly} whose ideas inspired the present paper).   {D}espite these important developments, the fundamental question of proving strict comparison for $C^*_r(\mathbb{F}_2)$ has evaded experts.   Within the community, it has been a widely-known open problem with initial motivation stemming from the work of Dykema-R{\o}rdam in 1998 (\cite{dykema1998projections, dykema2}, see also \cite[Proposition 6.3.2]{robert2012classification}), which addresses comparison in $C^*_r(\mathbb{F}_\infty)$. Note that the ideas behind the proof for strict comparison for $C^*_r(\mathbb{F}_\infty)$ \emph{cannot} be readily used to prove this result for  $C^*_r(\mathbb{F}_2)$. For instance, the proof there uses the ability to find freely independent elements from any finite subsets, up to norm perturbations, which holds inside $C^*_r(\mathbb{F}_\infty)$. However such behavior is not clear at all to witness inside  $C^*_r(\mathbb{F}_2)$. {Strict comparison for $C^*_r(\bF_2)$} has been explicitly stated as an open question in the work of Robert in 2012 \cite{robert2012classification}  and in the recent substantial problem list in $C^*$-algebras \cite{questions}, see Problems 86 and 88. 

Robert in \cite{robert2023selfless} asks a very interesting, more general question of selflessness for these $C^*$-algebras: the existence of a Haar unitary that is freely independent (in the reduced sense) from $C^*_r(G)$ viewed as the diagonal inclusion in its {$C^*$}-ultrapower. Interestingly, this property is well-studied and is of interest in the II$_1$ factor context where it turns out that such freely independent unitaries can always be constructed, due to beautiful work of Popa from  \cite{popa1995free}. Robert obtained a crucial insight that this selflessness property in $C^*$-algebras implies strict comparison through a soft ultrapower argument. For natural reasons, this is of independent interest from a continuous model theory point of view.

Another key motivation for showing strict comparison is that it is crucial in fully
computing the Cuntz semigroup, introduced {by Cuntz}
in his fundamental investigation on the existence of traces on
$C^*$-algebras \cite{Cuntzdimension}. Over several decades, Cuntz semigroup theory has evolved into a very mature field of research \cite{gardella2022moderntheorycuntzsemigroups}. It has been a long-sought  problem to obtain full computations of the Cuntz semigroup of reduced group $C^*$-algebras in a general sense and has been described in \cite{gardella2022moderntheorycuntzsemigroups} as ``may be out of reach''. The problem of computing the Cuntz semigroup for $C^*_r(\mathbb{F}_2)$ is even explicitly stated as Problem 16.4 therein. All of the problems described above are settled in this paper.

\begin{introthm}\label{main1}
    For any $n\in \mathbb{N}$ such that $n\geq 2$, $C^*_r(\mathbb{F}_n)$ has strict comparison. In fact, $C^*_r(\mathbb{F}_n)$  is selfless in the sense of Robert \cite{robert2023selfless}.
\end{introthm}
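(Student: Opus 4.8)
The plan is to \emph{not} attack strict comparison directly, but to deduce it from selflessness. Robert's soft ultrapower argument shows that selflessness implies strict comparison for any unital, simple, exact $C^*$-algebra with a unique trace, and $C^*_r(\bF_n)$ meets all these hypotheses (simplicity and unique trace by Powers, exactness since $\bF_n$ is exact). So the whole content of the theorem is to prove that $C^*_r(\bF_n)$ is selfless, i.e.\ to produce, inside the $C^*$-ultrapower $\prod_\omega C^*_r(\bF_n)$, a Haar unitary $u$ that is freely independent --- crucially in the \emph{reduced}, i.e.\ $C^*$-algebraic, sense --- from the diagonally embedded copy of $C^*_r(\bF_n)$. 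Since a free Haar unitary generates a copy of $C(\bT)$, this amounts to identifying the generated subalgebra with the reduced free product $C^*_r(\bF_n) * C(\bT)$.

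I would construct $u$ as the class $(u_k)_\omega$ of a sequence of group unitaries $u_k = \lambda(s_k)$. The first warning is that the naive choice $s_k = g^{N_k}$, high powers of a \emph{single} element, fails: such $u_k$ commute with $\lambda(g)$, so $u$ cannot be free from the whole algebra (for instance $\tau(u\lambda(g)u^{*}\lambda(g)^{*}) = 1 \neq 0$, whereas the free value is $0$). The remedy is to let the elements escape in varying directions: take $s_k = g_k^{N_k}$ where the $g_k$ are loxodromic elements (generic long reduced words in $\bF_n$; WPD elements with pairwise independent axes in the general acylindrically hyperbolic case) whose axes become transverse to any fixed bounded configuration, with $N_k \to \infty$ rapidly. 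Each $u_k$ is then a Haar unitary since $g_k$ has infinite order. The point of letting the axis move with $k$ is that for \emph{any fixed} group element the ``bad'' cyclic direction recedes, making asymptotic freeness from every fixed element possible; the trivial finite radical hypothesis is precisely what guarantees a supply of infinite-order, independent loxodromics.

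The verification splits in two. The first part is \emph{moment} (equivalently, $W^*$) freeness: one must check that every mixed moment of $u$ against the diagonal agrees with its value in the free product. Because the canonical trace on the group ring is the indicator of the identity, this reduces to the purely geometric assertion that for fixed centered elements $\lambda(h_1),\dots,\lambda(h_{m-1})$ (with $h_i \neq e$) and nonzero exponents, the alternating word $s_k^{p_1} h_1 s_k^{p_2} h_2 \cdots s_k^{p_m}$ is a nontrivial element of $\bF_n$ once $k$ is large. In $\bF_n$ this is controlled by reduced-word combinatorics; in general it follows from acylindricity of the action on the hyperbolic space, which forces long powers of the $g_k$ to translate so far along their axes that no cancellation can collapse the word against the bounded pieces $h_i$.

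The genuinely $C^*$-algebraic content --- and the step I expect to be the main obstacle --- is upgrading this to \emph{reduced} freeness: the operator norm of an alternating sum of centered elements must match the reduced free product norm \emph{uniformly in $k$}, so that the embedding is isometric rather than merely trace-preserving. This is exactly a strong-convergence statement in the spirit of Haagerup--Thorbj\o rnsen, and it is here that the rapid decay property is indispensable: the Haagerup inequality supplies polynomial norm bounds of the form $\bigl\lVert \sum_{|w|=\ell} c_w \lambda(w)\bigr\rVert \le C(\ell+1)\lVert c\rVert_2$, which convert the $L^2$-level control furnished by the moment computation into the uniform $C^*$-norm control needed to identify the generated algebra with the reduced free product. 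Carrying out these norm estimates carefully is the crux; once it is done, verifying Robert's hypotheses and passing from $\bF_n$ to the general statement for acylindrically hyperbolic $G$ with trivial finite radical and property RD is comparatively routine.
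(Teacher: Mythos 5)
Your high-level architecture does match the paper's: deduce strict comparison from Robert's selflessness via his ultrapower theorem, and realize the free Haar unitary as a limit $(\lambda(s_k))_{\mathcal{U}}$ of group unitaries escaping to infinity (the paper takes conjugates $s_k = h_k g h_k^{-1}$ in the free-product case of Proposition \ref{F2-is-selfless}, and high powers of the varying loxodromics $b^n a b^{-n}$ in the general case), with trace-level freeness reduced to nontriviality of alternating words. But there is a genuine gap exactly at the step you yourself flag as the crux, and your plan for it is misdirected in both directions. First, rapid decay alone cannot deliver what you ask of it: the Haagerup inequality gives only \emph{upper} bounds $\|\lambda(b)\| \le P(r)\|b\|_2$, with a polynomial factor that does not disappear by itself. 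The mechanism that makes it work (Theorem \ref{rapiddecaytrick}, following Louder--Magee) is to pass to high powers $(z^*z)^m$, use that the substitution $a \mapsto s_k$ extends to a group retraction $\phi : G * \langle a \rangle \to G$ that is \emph{injective on the ball of radius $2mR$} --- so that $\|\phi((z^*z)^m)\|_2 = \|(z^*z)^m\|_2$ --- and, crucially, that $\phi$ maps this ball into a ball of \emph{subexponential} radius $f(2mR)$, whence $P(f(2mR))^{1/2m} \to 1$. This forces a quantitative coupling between the word length of $s_k$ and the radius on which the alternating words stay nontrivial: one must achieve injectivity on $B(2mR)$ using an element $s_k$ whose length is subexponential in $m$ (this is exactly Definition \ref{selfless defn}; in the general case the needed exponents are bounded by the degree-$6$ polynomial of Theorem \ref{thm:quantitative admissible path}). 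Your prescription ``$N_k \to \infty$ rapidly'' pulls in the wrong direction: if $|s_k|$ grows exponentially in the injectivity radius, then $P(f(2mR))^{1/2m}$ stays bounded away from $1$ and the rapid decay estimate yields nothing.

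Second, the part you predict to be the main obstacle --- the lower bound, i.e.\ making the embedding isometric, which you frame as a two-sided strong-convergence statement \emph{\`a la} Haagerup--Thorbj{\o}rnsen --- requires no estimate at all. Once the one-sided asymptotic upper bound produces a $*$-homomorphism $C^*_r(G * \mathbb{Z}) \cong C^*_r(G) * C(\mathbb{T}) \to C^*_r(G)^{\mathcal{U}}$ restricting to the diagonal on $C^*_r(G)$ (Proposition \ref{resid-G-implies-embedding}), injectivity and hence isometry are automatic, because $G * \mathbb{Z}$ is $C^*$-simple and every $*$-homomorphism out of a simple $C^*$-algebra is injective (Corollary \ref{Final corollary}). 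So only the upper bound needs proving, and the uniform ``norm matching'' you propose is both unnecessary and unsupported by the tools you cite. To repair your proposal you would need to (i) replace the heuristic norm step by the power-trick/injectivity-on-balls/subexponential-growth argument, with the attendant quantitative control on how long $s_k$ must be to clear a given ball, and (ii) note the simplicity shortcut that disposes of the lower bound.
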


  Thanks to major developments   \cite{Rordam, MatuiSato}, we now know that for unital, separable, simple, unique trace, nuclear, infinite-dimensional $C^*$-algebras, strict comparison and $\mathcal{Z}$-stability are equivalent. Importantly, at the moment, outside the  $\mathcal{Z}$-stable case there are only a few examples of strict comparison arising from groups, such as infinite reduced free products like $C^*_r(\mathbb{F}_{\infty})$ (\cite[Proposition 6.3.2]{robert2012classification}). With our next main result, we greatly expand the list of known examples. To motivate this, we need to discuss the following notion.  We say a group $G$ has the \emph{rapid decay property} if, for every finite generating set $X$, there exists a polynomial $P$ such that for every element $a\in \mathbb{C}G$ supported on $B_X(r)$,
$\left\|\lambda_G(a)\right\|\le P(r)\|a\|_2.$ The rapid decay property (see the survey \cite{sapir2015rapid}) was first proved by Haagerup for the free groups \cite{HaagerupGod}. It was then generalized to Gromov hyperbolic groups \cite{Jolissaint, delaharpe}, relatively hyperbolic groups with peripheral subgroups having rapid decay \cite{sapir2015rapid}, cocompact lattices in $SL_3(\mathbb{R})$ or $SL_3(\mathbb{C})$  \cite{lafforgue, Chatterji}, mapping class groups of surfaces \cite{behrstockcentroid}, large type Artin groups \cite{holt}, heirarchically hyperbolic groups \cite{heir1}, and more. Rapid decay is extremely influential and has been used to prove several important problems, including the Novikov conjecture
\cite{CONNES1990345}, {the} Baum-Connes conjecture \cite{LafforgueBaumConnes}, and others including \cite{Grigorchuk}. Before we state our result, we will need to recall one more important and influential notion from geometric group theory \cite{osinicm}. A group is said to be \emph{acylindrically hyperbolic} if it admits a non-elementary acylindrical action on a Gromov hyperbolic metric space. We direct the reader to \cite{osin2016acylindrically} for context, examples, and references. Recall also the finite radical for a group is the largest finite normal subgroup. Within the class of acylindrically hyperbolic groups $G$, it turns out that the finite radical being trivial is equivalent to $C^*_r(G)$ having unique trace, which is in turn equivalent to $C^*_r(G)$ being simple (see Theorem 1.4 of \cite{Breuillard_2017} and Theorem 2.35 of \cite{dahmani2017hyperbolically}). We are now ready to state our next result:

\begin{introthm}\label{main2}

Let $G$ be an arbitrary finitely generated countable group that is acylindrically hyperbolic with trivial finite radical and, in addition, enjoys the rapid decay property. Then $C^*_r(G)$ has strict comparison. In fact, we have $C^*_r(G)$ is selfless in the sense of Robert \cite{robert2023selfless}.

\end{introthm}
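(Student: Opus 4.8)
The plan is to prove the stronger statement that $C^*_r(G)$ is selfless in the sense of Robert \cite{robert2023selfless}; strict comparison then follows from the soft ultrapower argument recorded there. Concretely, selflessness asks for a Haar unitary $u$ in the $C^*$-ultrapower $C^*_r(G)^\omega$ that is freely independent, in the reduced (operator-norm) sense, from the diagonal copy of $C^*_r(G)$. Since $G$ has trivial finite radical, the hypotheses guarantee that $C^*_r(G)$ is unital, simple, and has a unique trace (by Theorem 1.4 of \cite{Breuillard_2017} and Theorem 2.35 of \cite{dahmani2017hyperbolically}), so Robert's framework applies and it suffices to realize such a $u$ as the limit $u=(t_n)_\omega$ of a sequence of Haar unitaries $t_n\in C^*_r(G)$.

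For the geometric input, since $G$ is acylindrically hyperbolic \cite{osin2016acylindrically}, I would fix a loxodromic WPD element and use it to manufacture a sequence $t_n\in G$ of infinite-order elements (for instance suitable high powers and/or conjugates) that ``escape to infinity'': for every fixed finite subset $S\subset G$, the translates of $S$ by powers of $t_n$ become pairwise far apart and geodesically generic in the hyperbolic space as $n\to\infty$. Triviality of the finite radical ensures these loxodromics have infinite order, so each $\lambda_G(t_n)$ is a genuine Haar unitary. By density it is enough to test the freeness relations on centered, finitely supported elements $a_0,\dots,a_m\in\mathbb{C}G$, reducing everything to the combinatorics of supports in the Cayley graph.

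For the core estimates, given such $a_i$ and large $n$, I would first show that the alternating word $a_0 t_n^{k_1} a_1\cdots t_n^{k_m}a_m$ (with $k_i\neq 0$) undergoes no internal cancellation: its support avoids the identity, and both its cardinality and its radius are controlled. This immediately yields the moment-level (tracial) asymptotic freeness, namely that $\tau$ of every nontrivial reduced word tends to $0$, which is the classical von Neumann algebraic phenomenon in the spirit of \cite{popa1995free}. The decisive step is to upgrade this to the operator-norm freeness demanded by selflessness, i.e. to prove that $\lim_n\|a_0 t_n^{k_1}\cdots t_n^{k_m}a_m\|$ equals the corresponding norm in the reduced free product $C^*_r(G)\ast_{\mathrm{red}}C^*(\mathbb{Z})$. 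Here the rapid decay (Haagerup) inequality enters: because the word is supported on a tree-like, well-separated configuration, rapid decay bounds its reduced $C^*$-norm by a polynomial multiple of an $\ell^2$-norm that I can compute exactly from orthogonality, and as the supports separate these estimates converge to the free-product value.

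I expect the crux to be exactly this passage from soft, moment-level freeness to rigid $C^*$-norm freeness --- the very gap that separates the long-known von Neumann algebraic result from the open $C^*$-problem. The operator-norm upper bound is where both hypotheses are indispensable: acylindrical hyperbolicity supplies the geometric separation that organizes the support into a quasi-tree, while rapid decay converts that geometry into the sharp operator-norm control. Assembling these estimates produces the free Haar unitary $u\in C^*_r(G)^\omega$, establishing selflessness and hence strict comparison; Theorem~\ref{main1} is then the special case of free groups, which are acylindrically hyperbolic, have trivial finite radical, and enjoy rapid decay by Haagerup \cite{HaagerupGod}.
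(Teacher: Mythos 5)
Your overall strategy --- group elements escaping to infinity, rapid decay to pass from $\ell^2$-norms to operator norms, an ultrapower to package the limit --- matches the paper's in outline, but the step you yourself identify as the crux is wrong as you describe it. Rapid decay only ever gives the one-sided bound $\|\lambda_G(x)\| \le P(r)\|x\|_2$ for $x$ supported on $B_X(r)$, and for a fixed alternating word $a_0 t_n^{k_1}\cdots t_n^{k_m}a_m$ this estimate does \emph{not} ``converge to the free-product value'' as the supports separate: the polynomial factor $P(r)$ does not disappear, and no upper bound of this shape can by itself force equality of norms. The paper's mechanism is the Louder--Magee power trick: for $z\in\bC(G*\bZ)$ supported on a radius-$R$ ball with $\|z\|_1=1$, write $\|z\|^{2m}=\|(z^*z)^m\|$, apply rapid decay to the substituted element $(z^*z)^m$, use injectivity of the substitution map on the radius-$2mR$ ball to get \emph{exact} equality of the $\ell^2$-norms of $(z^*z)^m$ computed in $G$ and in $G*\bZ$, and take $2m$-th roots so the error factor becomes $P(f(2mR))^{1/2m}$ --- which tends to $1$ only because the word-length growth $f$ of the substituted elements is subexponential. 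This forces your geometric input to be quantitative rather than qualitative: you need epimorphisms $G*\bZ\to G$, identical on $G$, injective on radius-$n$ balls, with images inside radius-$f(n)$ balls for subexponential $f$ (the paper's notion of a selfless group). ``Escape to infinity'' of powers of a WPD element is not enough; if achieving injectivity on the radius-$n$ ball required exponentially long elements, the trick would fail, and the paper's entire appendix (a quantitative version of Yang's admissible path theorem, yielding a degree-$6$ polynomial bound on the powers of $g$ needed) exists precisely to rule this out.

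A second point: the passage from moment-level freeness to $C^*$-norm freeness that you flag as the crux is never carried out pointwise in the paper, and need not be. Once the upper bound above produces a contractive $*$-homomorphism $C^*_r(G*\bZ)\cong C^*_r(G)*C^*_r(\bZ)\to C^*_r(G)^\cU$ restricting to the diagonal embedding on $C^*_r(G)$, injectivity --- hence isometry, hence the exact free-product norms you wanted --- is automatic, because $G*\bZ$ is $C^*$-simple for any infinite $G$ and any $*$-homomorphism out of a simple $C^*$-algebra is injective; Robert's Theorem 2.6 then yields selflessness of $C^*_r(G)$ and strict comparison follows. So your tracial asymptotic freeness step is superfluous, and the word-by-word norm convergence you propose is both unproven and avoidable: soft simplicity replaces the hard lower bound entirely.
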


 Note that the trivial finite radical assumption in the above theorem is necessary due to the fact that these $C^*$-algebras will have unique trace as a consequence of being selfless (see also \cite{Breuillard_2017}). The following are some examples of groups (with an added trivial finite radical assumption) that fit into the above result (see \cite{osin2016acylindrically, dahmani2017hyperbolically}): non-elementary hyperbolic groups; non-elementary relatively hyperbolic groups relative to peripheral subgroups with the rapid decay property, {which includes} free products of groups with {the} rapid decay property; mapping class groups of compact orientable surfaces \cite{bestvina}; graph products of rapid decay groups over graphs without a splitting \cite{minasyan}; heirarchically hyperbolic groups \cite{heir2}. In light of \cite{lafforgue}, it is very timely and interesting to ask if our main results hold in the setting of higher rank lattices, especially cocompact lattices in $SL_3(\mathbb{R})$ \footnote{In a recent development \cite{vigdorovich2025structuralpropertiesreducedcalgebras}, I. Vigdorovich has settled this problem.}. Our proof strategy has no bearing in this setting, and therefore we leave it as an important open question arising from our work. Notably, even stable rank $1$ seems to be an open problem for reduced group $C^*$-algebras of higher rank lattices. We also speculate that our proof technique might open up a way to prove {strict comparison} for even wider families of groups, such as all $C^*$-simple groups with rapid decay, see Remark \ref{simple}. \footnote{In a very recent development \cite{ozawa2025proximalityselflessnessgroupcalgebras}, N. Ozawa has developed a new approach that proves selflessness for the reduced $C^*$-algebras of groups admitting \emph{extremely proximal} boundaries.}

 Importantly, also see Remark \ref{rem:subsemigroup}, which describes why the rapid decay assumption cannot be removed with our current approach. However, we point out that our result gives new examples of groups which do \emph{not} have rapid decay property but whose reduced $C^*$-algebras are selfless. Indeed by Theorem \ref{main1} and Theorem 4.2 of \cite{robert2023selfless}, we have $C^*_r(\mathbb{F}_2*SL_3(\mathbb{Z}))$ is selfless, however  $\mathbb{F}_2*SL_3(\mathbb{Z})$ does not have rapid decay (see Remark 8.6 in \cite{ValetteBC}). 
 
\subsection{Applications to classification and Cuntz semigroup theory}\label{applications section}

 It is a very standard fact that every II$_1$ factor contains a copy of the hyperfinite II$_1$ factor $R$ uniquely up to pointwise approximate unitary equivalence.  The most naïve analogue of this in $C^*$-algebras would be that every simple, unital, infinite-dimensional $C^*$-algebra contains the Jiang-Su algebra $\mathcal{Z}$ uniquely up to pointwise approximate unitary equivalence.  Both existence and uniqueness problems for embeddings of $\mathcal{Z}$ tend to be difficult, see \cite[Section 18]{questions}.  It is known that there are simple, unital, infinite-dimensional $C^*$-algebras that do not contain $\mathcal{Z}$ \cite{dadarlat}.  It remains open if there is any unital $C^*$-algebra $A$ with two non-equivalent unital embedding of $\mathcal{Z}$. Our work yields the following concrete application. The result below follows from Proposition 6.3.1 in \cite{robert2012classification} (which can be applied since the reduced $C^*$-algebras of the groups in consideration have unique quasitrace from Haagerup's famous quasitrace theorem \cite{Haageruptraces}, and have stable rank one from \cite{Osinacylindrical} see also \cite{raum2024twistedgroupcalgebrasacylindrically}) and our Theorem \ref{main2}.   Note also that the following result addresses problems 62 and 64 in \cite{questions}:

\begin{introthm}
        For any exact group $G$ in the class of groups covered in Theorem \ref{main2}, there is a unital embedding of the Jiang-Su algebra $\mathcal{Z}\to C^*_r(G)$, and this embedding is unique up to pointwise approximate unitary equivalence.
\end{introthm}

As explained in the beginning of the introduction,  strict comparison is a crucial hypothesis that is used in classifying maps up to $K$-theoretic and tracial data in the nuclear {setting}. In the natural progression of the current landscape of classification, we expect new developments to emerge in the more general setting of non-nuclear $C^*$-algebras, wherein strict comparison will still naturally play a crucial {role} as a hypothesis for the co-domain for classifying maps between $C^*$-algebras up to the total invariant. Our results here will provide a wealth of interesting new and natural co-domains for this type of classification phenomena.

In \cite{Cuntzsemigroup}, Coward-Elliott-Ivanescu gave a new picture of
the Cuntz semigroup in terms of Hilbert modules; they {showed} that for
$C^*$-algebras of stable rank one, the Cuntz semigroup encodes precisely the
isomorphism classes of countably generated Hilbert modules, similarly to
how the Murray-von Neumann semigroup encodes isomorphism classes of
finitely generated projective modules. The theory of Cuntz semigroups is
particularly well-developed for $C^*$-algebras of stable rank one \cite{ThielDuke}.       An important notion in this context is pureness, which was introduced by
Winter in his seminal study \cite{Winterinvent} of regularity properties for simple,
nuclear $C^*$-algebras. Pureness can be thought of as $\mathcal{Z}$-stability at the
level of the Cuntz semigroup, since in \cite{Tensorproductsthiel}, it was shown that a
$C^*$-algebra $A$ is pure if and only if its Cuntz semigroup $\mathrm{Cu}(A)$
tensorially absorbs $\mathrm{Cu}(\mathcal{Z})$. More recently, it was shown that pure
$C^*$-algebras form a robust class \cite{antoine2024purecalgebras}, and as observed in \cite[Example 5.14]{antoine2024purecalgebras}, if $G$ is an exact, infinite, $C^*$-simple group, then $C^*_r(G)$ has
strict comparison if and only if $C^*_r(G)$ is pure. If $A$ is a unital, separable, simple, stably finite, pure
$C^*$-algebra, then the Cuntz semigroup can be computed as
$\mathrm{Cu}(A) = V(A) \sqcup \mathrm{LAff}(QT(A))_{++}$ (see Theorem 2.6 in \cite{browntoms}). Here
where $QT(A)$ denotes the quasitracial states on $A$, $V(A)$ denotes the Murray-von Neumann semigroup and $\mathrm{LAff}(K)$ means the lower-semicontinuous, affine functions
$K$ to $(-\infty,\infty]$ if $K$ is a compact convex set, and $\mathrm{LAff}(K)_{++}$ means
the subset of strictly positive elements, that is, the
lower-semicontinuous, affine functions $K$ to $(0,\infty]$. If $K$ contains only one point, then $\mathrm{LAff}(K)_{++} = (0,\infty]$. If $A$ is exact, then
$QT(A)=T(A)$ by a famous result of Haagerup \cite{Haageruptraces}. Thus, if G is an exact, infinite, C*-simple group such that $C^*_r(G)$ has
strict comparison, then its Cuntz semigroup can be computed as
$\mathrm{Cu}(C^*_r(G)) = V(C^*_r(G)) \sqcup [0,\infty]$
since such $C^*$-algebras have a unique trace \cite{Breuillard_2017}.  Combined with our main results, this answers Problem 16.4 from \cite{gardella2022moderntheorycuntzsemigroups} which asks to fully compute the Cuntz semigroup for $C^*_r(\mathbb{F}_n)$, which is a Cuntz semigroup version of Blackadar’s fundamental comparison property \cite{Blackadar_1989} for projections in $C^*_r(\mathbb{F}_n)$.

\begin{introthm}
The Cuntz semigroup of $C^*_r(\mathbb{F}_2)$ is $\mathbb{N}\sqcup [0,\infty]$.
\end{introthm}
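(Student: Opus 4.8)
The plan is to feed Theorem~\ref{main1} into the general Cuntz-semigroup machinery recalled just above, so that the only remaining work is the explicit identification of the two summands. Concretely, I would apply the Brown--Toms decomposition (Theorem~2.6 of \cite{browntoms}), namely $\mathrm{Cu}(A)=V(A)\sqcup\mathrm{LAff}(QT(A))_{++}$, valid for unital, separable, simple, stably finite, pure $A$, with $A=C^*_r(\mathbb{F}_2)$. Thus the argument splits into three tasks: verifying the hypotheses (in particular pureness), computing the purely positive summand, and computing the Murray--von Neumann semigroup $V$.

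For the hypotheses, $C^*_r(\mathbb{F}_2)$ is unital and separable since $\mathbb{F}_2$ is countable, simple by \cite{Powers}, and stably finite since it carries a faithful tracial state. Pureness is where Theorem~\ref{main1} enters: as $\mathbb{F}_2$ is exact, infinite and $C^*$-simple, the equivalence recorded in \cite[Example~5.14]{antoine2024purecalgebras} says that $C^*_r(\mathbb{F}_2)$ is pure precisely when it has strict comparison, and the latter is exactly Theorem~\ref{main1}. For the purely positive summand, exactness together with Haagerup's theorem \cite{Haageruptraces} gives $QT(C^*_r(\mathbb{F}_2))=T(C^*_r(\mathbb{F}_2))$, and this trace space is a single point by uniqueness of the trace. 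Hence $\mathrm{LAff}(QT(A))_{++}=\mathrm{LAff}(\{*\})_{++}=(0,\infty]$, which together with the zero class already present in $V(A)$ is the $[0,\infty]$ appearing in the statement.

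It then remains to show $V(C^*_r(\mathbb{F}_2))\cong\mathbb{N}$. Here I would use that $C^*_r(\mathbb{F}_2)$ has stable rank one \cite{stablerankHaagerup}, which forces cancellation in $V(A)$ and hence an injection $V(A)\hookrightarrow K_0(A)$. By \cite{projectionsPicu} one has $K_0(C^*_r(\mathbb{F}_2))\cong\mathbb{Z}$, generated by $[1]$, and the unique trace induces an isomorphism $K_0(A)\cong\mathbb{Z}$ sending $[1]\mapsto 1$. Every projection $p\in M_k(C^*_r(\mathbb{F}_2))$ satisfies $\tau(p)\geq 0$, so its class lies in $\mathbb{Z}_{\geq 0}$; conversely each $n\geq 0$ is realized by a rank-$n$ diagonal projection. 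By cancellation, distinct classes are inequivalent, so $V(A)\cong\mathbb{N}$. Combining the three computations yields $\mathrm{Cu}(C^*_r(\mathbb{F}_2))=\mathbb{N}\sqcup[0,\infty]$.

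The deep input is of course Theorem~\ref{main1}; once strict comparison is in hand, the remaining steps are an assembly of established structural results, so I do not expect a serious obstacle. The one place that genuinely requires care is the computation of $V$: the classical fact that $C^*_r(\mathbb{F}_2)$ has no nontrivial projections \cite{projectionsPicu} concerns $A$ itself and does \emph{not} by itself determine $V(A)$, which involves all matrix amplifications $M_k(A)$. It is precisely the passage through $K_0$ and stable rank one that pins the monoid down as $\mathbb{N}$.
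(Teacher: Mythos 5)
Your proposal is correct and follows essentially the same route as the paper: the paper's argument is precisely the Brown--Toms decomposition $\mathrm{Cu}(A)=V(A)\sqcup\mathrm{LAff}(QT(A))_{++}$ applied via pureness (from strict comparison, i.e.\ Theorem~\ref{main1}, together with \cite[Example~5.14]{antoine2024purecalgebras}), Haagerup's $QT(A)=T(A)$, and uniqueness of the trace. Your explicit identification $V(C^*_r(\mathbb{F}_2))\cong\mathbb{N}$ via stable rank one (cancellation) and the Pimsner--Voiculescu computation $K_0\cong\mathbb{Z}$ is a detail the paper leaves implicit, and you carry it out correctly.
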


Note that, our Theorem \ref{main2} can be used to fully compute the Cuntz semigroup for the vast family of exact acylindrically hyperbolic groups with trivial finite radical and with rapid decay property as described above. This partially settles the general problem in the paragraph before Problem 16.4 in \cite{gardella2022moderntheorycuntzsemigroups}, obtaining a full computation for many classes of $C^*$-simple groups. For  examples of groups with stable rank one, see \cite{stablerankHaagerup, Osinacylindrical, raum2024twistedgroupcalgebrasacylindrically}. Note our work also recovers with a new proof, stable rank one for the families of groups in the statement of Theorem \ref{main2}  (via Theorem 3.1(2) of \cite{robert2023selfless}).

\subsection{Comments to the reader}
The inroad to prove strict comparison is to prove the more general free independence theorem in the norm ultrapower of the reduced group $C^*$-algebra, which is the \emph{selflessness} property from \cite{robert2023selfless}. As mentioned previously, our inspiration for the argument to achieve this came from a recent \emph{very} beautiful paper by Louder and Magee \cite{louder2022strongly}. Curiously, our original approach to solving the problem of strict comparison was (motivated in a sense by the work of the third author \cite{sri}) to use strong convergence methods in random matrix theory, and this is what indirectly led us to \cite{louder2022strongly}. Now, in order to prove this free independence theorem, one benefits by using the rapid decay property (see \cite{sapir2015rapid, HaagerupGod}), which allows one to control the operator norm of elements in the group ring in terms of the radius of the ball of support and the 2-norm. A key next step is a quantitative analysis of the eventual faithfulness of certain tailored retraction maps that one constructs using the group structure; for instance, free product structure, or, more generally, an acylindrical action on a hyperbolic space. This is used to obtain a particularly strong existential inclusion of the group into the free product of itself with a copy of $\mathbb{Z}$. Piecing these ideas together allows one to push this inclusion {to} the level of the reduced group $C^*$-algebra, as needed.

\subsection*{Concerning the organization} The proofs of these results split into a hands-on computational part (showing selflessness in our sense, see Definition \ref{selfless defn}) and a soft $C^*$-algebra part (using rapid decay \emph{à la} Louder-Magee, see Theorem \ref{rapiddecaytrick}), and then reaching the finish line for strict comparison with Robert's \cite{robert2023selfless}. If the reader would like to just focus on understanding the proof for arbitrary free products with rapid decay, we direct them to a self contained approach via the combination of Proposition \ref{F2-is-selfless} and Theorem \ref{rapiddecaytrick}. We also remark to the reader that after our paper was released publicly, M. Magee kindly shared with us his proof (via a quantitative Baumslag lemma) for just the non abelian free group case. This is in Subsection \ref{mageesubsection}.

On the other hand, the proof of Theorem \ref{main2} is quite involved, but the quantitative part is somewhat natural from a geometric point of view.  Appendix \ref{appendix main} contains a quantitative version of a key theorem of Wenyuan Yang that is needed to prove our result \ref{main2}. This is quite technical and we recommend the interested reader to side by side refer to \cite[Corollary 3.4]{YANG_2014}.

\subsection*{Acknowledgments} This result was proved during the visit of the first author to UCSD in Fall 2024, supported by the third author's NSF grant DMS 2350049 and the Institute of Mathematics of the Polish Academy of Sciences. The second and fourth author were supported in part by NSF grant DMS 2153805. We thank our colleagues J. Gabe, D. Jekel, N.C. Phillips, L. Robert, M. R{\o}rdam, C. Schafhauser, G. Szabo, H. Thiel and S. White for extremely helpful comments on the introduction. We sincerely thank all the experts we shared our earlier draft with, for providing us great encouragement and helpful comments to improve our paper. We also thank our colleagues J. Behrstock, F. Fournier-Facio, J. Huang, Y. Lodha, M. Magee, D. Osin, A. Sisto, and W. Yang for helpful pointers and feedback. We thank the anonymous referee for several very helpful and detailed comments that improved the paper.

\section{Notation}

Let $\Gamma$ be a countable discrete group. If $Y$ is a generating set for $\Gamma$ then we denote by $B_Y(R)$ the elements of the $R$-ball around the identity in the Cayley graph of $\Gamma$ with the generating set $Y$. Let $\bC\Gamma$ denote the complex group algebra associated to $\Gamma$. We equip $\bC\Gamma$ with three separate norms. Let $x = \sum{x_g}g$, then
\begin{align*}
    \|x\|_1 &:= \sum_{g\in \Gamma}|x_g|; \\
    \|x\|_2^2 &:= \sum_{g\in \Gamma}|x_g|^2;\\
    \|x\| &:= \sup\{\|xy\|_2 : y\in\bC\Gamma, \ \|y\|_2 \leq 1\}.
\end{align*}
We call these norms on $\bC\Gamma$ the \emph{$\ell^1$-norm}, the \emph{$\ell^2$-norm}, and the reduced \emph{operator norm} respectively. As suggested by the definition of the operator norm, each element of $\bC\Gamma$ can be viewed as a bounded operator on the Hilbert space $\ell^2(\Gamma)$, by the left regular representation $\lambda_\Gamma: \bC\Gamma\to \mathbb{B}(\ell^2\Gamma)$ (define $\lambda_\Gamma(g)(\delta_h)= \delta_{gh}$ and extend linearly). Note therefore we can write $\|x\|= \|\lambda_\Gamma(x)\|$ when viewed as a bounded operator on $\ell^2(\Gamma)$.  Note that for any element $x\in \mathbb{C}\Gamma$, one has the following elementary inequality $\|x\|_2\leq \|x||\leq \|x\|_1.$
We define the \emph{reduced group $C^*$-algebra} $C_r^*(\Gamma)$ to be the operator norm closure of $\bC\Gamma$ inside of $\bB(\ell^2(\Gamma))$. We will also need the following elementary fact, which follows from the triangle inequality: if $\phi:\Gamma\to G$ is a group homomorphism then $\phi$ extends to an $\ell^1$-contractive map between $\bC\Gamma$ and $\bC G$.

A \emph{state} on a unital $C^*$-algebra $A$ is a positive linear functional $\phi$ such that $\phi(1) = 1.$ We say that a state $\phi$ is \emph{faithful} if $\phi(x^*x) > 0$ for any nonzero $x\in A$. We call $\phi$ a \emph{trace} if $\phi(xy)=\phi(yx)$ for all $x,y\in A$. Let $A_1,A_2\subset (A,\phi)$ be subalgebras. We say that $A_1$ and $A_2$ are \emph{freely independent} relative to $\phi$ if whenever $a_k \in A_{i_k}$ with $\phi(a_k)=0$ and $i_k \neq i_{k+1}$ for $1\leq k< n$ then $\phi(a_1\cdots a_n) = 0.$ If $(B_1,\rho_1),(B_2,\rho_2)$ are unital $C^*$-algebras with faithful states, then there is a unital $C^*$-algebra with a faithful state $(B,\rho)$ and embeddings $\pi_i:(B_i,\rho_i)\hookrightarrow (B,\rho)$ such that $\pi_1(B_1)$ and $\pi_2(B_2)$ are freely independent relative to $\rho.$ We call $(B,\rho)$ the \emph{reduced free product} of $(B_1,\rho_1)$ and $(B_2,\rho_2)$; we omit the states when they are clear from context (see \cite{avitzour1982free,voiculescu1985symmetries}), and write $B_1*B_2$. We have in this context that whenever $G_1$ and $G_2$ are countable groups, $C^*_r(G_1*G_2)\cong C^*_r(G_1)*C^*_r(G_2)$, see Proposition 1.5.3 in \cite{VDN1992}.

Let $\cU$ be a free ultrafilter on the natural numbers. For a discrete group $\Gamma$ we denote by $\Gamma^\cU$ the following quotient: 
$$\Gamma^\cU := \left(\prod_{\bN}\Gamma\right) / N_\cU,$$
where $N_\cU$ consists of all sequences which are eventually identity with respect to $\cU$. More precisely, a sequence $(g_n)_n\in N_\mathcal{U}$ if and only if there is $S\in \cU$ such that $g_n = 1_\Gamma$ for all $n\in S.$ We call $\Gamma^\cU$ an \emph{ultrapower} of $\Gamma$. If $A$ is a $C^*$-algebra, we define $A^\cU$ as
$$A^\cU := \{(x_n)_n : x_n\in A, \ \sup_n\|x_n\|<\infty\} / I_\cU$$
where $I_\cU$ is the ideal consisting of all norm-bounded sequences $(x_n)$ in $A$ with the following property: for all $\ee>0$ there is $S\in \cU$ such that for all $n\in S$, $\|x_n\| < \ee$. We call $A^\cU$ an \emph{ultrapower} of $A$. Note that under the definition  $\|(x_n)_{\mathcal{U}}\|_{A^\mathcal{U}}= \lim_{\mathcal{U}}\|x_n\|$, and the natural product structure, $A^\cU$ is a $C^*$-algebra. 

\section{Proof of main results}

\subsection{A new property for groups} The following is a quantitative strengthening of the definition of the mixed identity free (MIF) property for groups \cite{MIF}, and our terminology is motivated by \cite{robert2023selfless} and is very much related to the quantitative Baumslag lemma from \cite{louder2022strongly}.

\begin{defn}\label{selfless defn}
     We say a group with a finite generating set $(G,X)$ is \emph{selfless}  if there is a function $f:\bN \to \bR_+$ with $\liminf_n f(n)^{\frac{1}{n}} = 1$ such that for all $n\geq 1$, there is an epimorphism $\phi_n : G * \ip{a} \to G$ such that $\phi_n|_G = \id_G$, $\phi_n$ is injective on $B_{X\cup\{a\}}(n)$, and $\phi_n(B_{X\cup\{a\}}(n)) \subset B_X(f(n))$. Here, $a$ is a torsion-free generator so that $G * \ip{a} \cong G * \bZ$.
\end{defn}

The following is a particularly motivating example:

\begin{prop}\label{F2-is-selfless}
    If $(G, X)$ contains a torsion-free element and $(H, Y)$ is an infinite group, then $G*H$ is selfless. In particular, $\bF_n$ is selfless for all $n\geq 2.$
\end{prop}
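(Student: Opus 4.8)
The plan is to deduce the ``in particular'' statement as a special case: writing $\bF_n = \bZ * \bF_{n-1}$ for $n \ge 2$, the factor $G = \bZ$ contains a torsion-free element and $H = \bF_{n-1}$ is infinite, so it suffices to prove the first assertion. Throughout set $\Gamma = G * H$ with generating set $Z = X \cup Y$, and recall the free-product normal form: every element is uniquely a reduced alternating product of syllables from the two factors, and with respect to $Z$ the word length is additive over syllables, i.e. $|c_1 \cdots c_p|_Z = \sum_j |c_j|$ with $|c_j|$ measured in $X$ or $Y$. The only consequence of the hypotheses I will use is that \emph{both factors are infinite} (a torsion-free element forces $G$ to be infinite).

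The construction rests on one elementary observation: in any infinite finitely generated group $(K,S)$ one has $B_S(R) \subsetneq B_S(R+1)$ for every $R$, since otherwise $B_S(R)$ would be closed under multiplication by generators and hence equal $K$, contradicting infiniteness. Thus $K$ has an element of word length exactly $R$ for every $R \ge 0$. Applying this to $G$ and $H$, I fix for each $n$ elements $g_n \in G$ and $h_n \in H$ with $|g_n|_X = |h_n|_Y = n+1$, and set
\[
    w_n := g_n h_n \in \Gamma.
\]
This is a cyclically reduced word of syllable length two (hence of infinite order), and I define $\phi_n : \Gamma * \langle a \rangle \to \Gamma$ by $\phi_n|_\Gamma = \id_\Gamma$ and $\phi_n(a) = w_n$; the universal property of the free product makes this a well-defined epimorphism, as it is already onto $\Gamma$.

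The heart of the matter is injectivity of $\phi_n$ on $B_{Z \cup \{a\}}(n)$. Take a nontrivial reduced word $\omega = s_0 a^{k_1} s_1 \cdots a^{k_m} s_m$ with $|\omega|_{Z \cup \{a\}} \le n$; if $m = 0$ then $\phi_n(\omega) = \omega \ne 1$, so assume $m \ge 1$. Additivity of length gives $|s_i|_Z \le n$ for each $i$, so every syllable appearing in any $s_i$ has factor-length at most $n$. Now examine $\phi_n(\omega) = s_0 w_n^{k_1} s_1 \cdots w_n^{k_m} s_m$: each block $w_n^{k_i}$ is an alternating product of the full-length syllables $g_n^{\pm 1}, h_n^{\pm 1}$, each of factor-length $n+1$, whereas every syllable coming from the $s_i$ has factor-length $\le n$. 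A full-length syllable can be annihilated in the reduction only by an adjacent syllable equal to its inverse, which must also have factor-length $n+1$ and hence must itself come from a $w_n$-block; but consecutive full-length syllables inside a block lie in different factors, and at each block boundary the adjacent $s_i$-syllable is strictly shorter, so it can at most shorten -- never cancel -- the boundary syllable. Hence no full-length syllable is annihilated, the reduced form of $\phi_n(\omega)$ is nonempty, and $\phi_n(\omega) \ne 1$. This proves injectivity on the ball.

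Finally, the growth bound. Since $|w_n|_Z = |g_n|_X + |h_n|_Y = 2(n+1)$ and each element of $B_{Z \cup \{a\}}(n)$ is a word of length at most $n$ in which each letter $a^{\pm 1}$ is replaced by $w_n^{\pm 1}$, we get $\phi_n(B_{Z \cup \{a\}}(n)) \subset B_Z(f(n))$ with $f(n) = 2n(n+1)$. As $f(n)^{1/n} = (2n^2 + 2n)^{1/n} \to 1$, the function $f$ satisfies $\liminf_n f(n)^{1/n} = 1$, so $(\Gamma, Z)$ is selfless. The only genuine obstacle is the cancellation analysis in the injectivity step; the length-separation device -- forcing the inserted syllables $g_n, h_n$ to be strictly longer than anything visible in the $n$-ball -- is precisely what blocks long-range cancellation from collapsing a $w_n$-block, and it is the free-product incarnation of the quantitative Baumslag lemma used later in the general acylindrical case.
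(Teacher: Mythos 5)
Your construction and growth estimate are fine, and the cancellation analysis you give is correct for the words you actually consider, but there is a genuine gap at the point where you conclude ``this proves injectivity on the ball.'' What you prove is that $\ker\phi_n\cap B_{Z\cup\{a\}}(n)=\{e\}$. Injectivity of $\phi_n$ \emph{on} $B_{Z\cup\{a\}}(n)$ is the stronger statement that $\phi_n(x)\neq\phi_n(y)$ for all distinct $x,y$ in the ball; since $\phi_n$ is a homomorphism, this amounts to $\ker\phi_n$ meeting $B_{Z\cup\{a\}}(n)^{-1}B_{Z\cup\{a\}}(n)\subset B_{Z\cup\{a\}}(2n)$ trivially, and this stronger form is what gets used downstream (in Theorem \ref{rapiddecaytrick}, injectivity on the $2mR$-ball is exactly what yields $\|(b_i^*b_i)^m\|_2=\|(a_i^*a_i)^m\|_2$, i.e.\ one needs distinct ball elements to remain distinct). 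For a nontrivial $\omega\in B_{Z\cup\{a\}}(2n)$ your length-separation device breaks down: the syllables of the $s_i$ may now have factor-length up to $2n\geq n+1$, so such a syllable can literally equal $g_n^{-1}$ or $h_n^{-1}$, annihilate a boundary syllable of a $w_n$-block, and trigger the cascading cancellation your argument is designed to rule out. Your key claim --- that every merged product is nontrivial because of a strict length mismatch --- is simply false on the $2n$-ball with $|g_n|_X=|h_n|_Y=n+1$. (Injectivity on $B(n)$ may well still hold for your parameters, since the shortest nontrivial kernel element $aw_n^{-1}$ has length $2n+3$, but your written argument does not establish it.)

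The repair is exactly the quantitative margin built into the paper's proof, which is why the paper takes $|h_n|_Y=2n+1$ and then verifies nontriviality of products $s_1t_1\cdots s_m$ with $|s_i|_{X\cup Y}\le 2n$ --- the bound $2n$ being precisely what syllables of $x^{-1}y$ can reach when $x,y\in B(n)$. In your setup the same one-line change works: choose $|g_n|_X=|h_n|_Y=2n+1$, so that every $s_i$-syllable of a word in $B_{Z\cup\{a\}}(2n)$ has factor-length at most $2n<2n+1$; then your merging analysis (including the conjugation-type triple merges $h_nuh_n^{-1}$ and $g_n^{-1}ug_n$, nontrivial since the interior $s_i$ are nontrivial) goes through verbatim on the $2n$-ball, and the growth bound becomes $f(n)=n(4n+2)$, still subexponential. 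With that fix your route is a correct, mild variant of the paper's: you send $a$ to the long-long product $g_nh_n$, whose infinite order comes free from the normal form and which only uses infiniteness of both factors, whereas the paper sends $a$ to $h_ngh_n^{-1}$, a conjugate of a fixed torsion-free element, applying length separation only on the $H$-side; both are the same mechanism of making inserted syllables strictly longer than anything visible in the relevant ball.
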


\begin{proof}
    Let $g\in G$ be torsion-free. Set $p = |g|_X$. For each $n$ let $h_n\in H$ be an element such that $|h_n|_Y = 2n+1$. Define $\phi_n : G*H*\ip{a} \to G*H$ by $\phi_n|_{G*H} = \id_{G*H}$ and $\phi_n(a) = h_ngh_n^{-1}$. It is clear that $\lim_n |\phi_n(a)|_{X\cup Y}^{\frac{1}{n}} = 1$, and therefore 
    $$\phi_n(B_{X \cup Y\cup\{a\}}(n)) \subset B_{X\cup Y}(f(n));$$
    for some function $f$ where $\lim_n f(n)^{\frac{1}{n}} = 1.$

    It remains to show that $\phi_n$ is injective on $B_{X\cup Y \cup\{a\}}(n)$. It suffices to show that, if
    $$s_1t_1\cdots s_{m-1}t_{m-1}s_m = e$$
    where $s_i \in G*H$ are such that $|s_i|_{X\cup Y} \le 2n$ for all $1 \le i \le m$ and $s_i\neq e$ for all $1 < i < m$, and $t_i = h_ng^{p_i}h_n^{-1}$ for some nonzero integers $p_i$, then $m=1$ and $s_1=e$.

    Assume that $m>1$. We expand out the expression $s_1t_1\cdots s_{m-1}t_{m-1}s_m$ and regroup the product as follows:
    $$(s_1 h_n)g^{p_1}(h_n^{-1}s_2h_n)g^{p_2}(h_n^{-1}s_3\cdots s_{m-1}h_n)g^{p_{m-1}}(h_n^{-1}s_m).$$

    We claim that each term in parentheses begins and ends with a letter in $H$. Indeed, for each $s_i\in H$ (necessarily nontrivial) this is immediate. Otherwise, $s_i$ begins either with a letter in $G$ or a letter in $H$ of length at most $2n$ followed by a letter in $G$; $s_i$ ends in a similar way. Therefore $h_n^{-1}s_ih_n$ begins and ends with a letter in $H$. Since each $p_i$ is nonzero, we now see that the above product is a nontrivial word in the free product; this finishes the proof.
\end{proof}

\subsection{A general family of selfless groups}

We will prove here that all finitely generated acylindrically hyperbolic groups  with trivial finite radical are selfless. For this subsection, for background notation and preliminaries on acylindrically hyperbolic groups, please refer to the beautiful memoir \cite{dahmani2017hyperbolically}, and also \cite{osin2016acylindrically}. At the outset we wish to express our gratitude to the following two experts for their guidance.
Firstly we sincerely thank D. Osin who first pointed out to us that our selflessness for free products (Proposition \ref{F2-is-selfless}) will hold in the general setting of acylindrically hyperbolic groups. 
Secondly we are very grateful to W. Yang for kindly sharing with us his ideas towards a different approach. First we describe the outline for our proof: 

\subsubsection*{Outline of proof}

The key step in the proof is to use a hyperbolically embedded $\mathbb{F}_2$ subgroup to construct the sequence of elements going to infinity, which will implement selflessness. The fact that this embedding is a Lipschitz quasi-retract \cite{dahmani2017hyperbolically} is crucial to relate word length inside $\bF_2$ and $G$. Then to check that these elements don't form a trivial word with a ball with a controlled co-growth, we have to use a quantitative version of  Wenyuan Yang's ``admissible path lemma'' in \cite{YANG_2014}, and argue that any word yields a quasi-geodesic. The proof crucially uses both acylindricity and geometry of hyperbolic space. Roughly, it says that a concatenation of quasigeodesics which travels alternatively near contracting subsets and leaves them in an orthogonal way, is a quasigeodesic. The careful quantitative analysis of this argument which is needed for our proof is done in Appendix \ref{appendix main}.

\begin{thm}\label{mainselfless}
    Let $G$ be a finitely generated acylindrically hyperbolic group with trivial finite radical. Then $G$ is selfless. 
\end{thm}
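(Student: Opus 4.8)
The strategy is to realize the free-product construction of Proposition~\ref{F2-is-selfless} geometrically, using a hyperbolically embedded free subgroup of $G$ in place of a genuine free factor. Fix a non-elementary acylindrical action of $G$ on a hyperbolic space $(S,d)$ with basepoint $o$. Since $G$ has trivial finite radical, the machinery of \cite{dahmani2017hyperbolically, osin2016acylindrically} provides two independent loxodromic elements $g_0, h_0$ generating a free subgroup $\bF_2 = \langle g_0, h_0\rangle \hookrightarrow_h G$ that is hyperbolically embedded, and which can be chosen with cyclic elementary closures $E(g_0) = \langle g_0\rangle$ and $E(h_0) = \langle h_0\rangle$; this is the geometric form of the mixed-identity-free hypothesis and is exactly where triviality of the finite radical enters. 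Write $A_0, B_0$ for the axes of $g_0, h_0$, which are strongly contracting with uniform constants. I would then set $\phi_n|_G = \id_G$ and
\[
\phi_n(a) \;=\; c_n \;:=\; h_0^{N(n)}\, g_0\, h_0^{-N(n)},
\]
where $N(n)$ is an integer growing linearly in $n$, to be fixed at the end.

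Two of the three requirements of Definition~\ref{selfless defn} are then cheap. As $\phi_n$ restricts to the identity on $G$, its image already contains $G$, so $\phi_n$ is automatically an epimorphism. For the length bound, note first a \emph{necessary} constraint on any valid image: the reduced word $a^{-1}c_n$ has $(X\cup\{a\})$-length $1+|c_n|_X$ and is killed by $\phi_n$, so injectivity on $B_{X\cup\{a\}}(n)$ forces $|c_n|_X \ge n$; this is precisely why $c_n$ must be taken long, and it fixes the scale $N(n)\sim n$. Conversely $|c_n|_X \le 2N(n)\,|h_0|_X + |g_0|_X = O(n)$, so an element of $B_{X\cup\{a\}}(n)$ maps into $B_X(f(n))$ with $f(n) = O(n^2)$, whence $\liminf_n f(n)^{1/n} = 1$. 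All the substance therefore lies in the injectivity of $\phi_n$ on $B_{X\cup\{a\}}(n)$.

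Exactly as in Proposition~\ref{F2-is-selfless}, injectivity reduces to showing that a nontrivial reduced word $W = s_0 a^{k_1} s_1 \cdots a^{k_m} s_m$, with $s_i \in G$, $k_i \ne 0$, interior $s_i \ne e$, and $\sum_i |s_i|_X + \sum_i |k_i| \le n$, has nontrivial image. Regrouping the image as in that proof gives
\[
\phi_n(W) = (s_0 h_0^{N})\, g_0^{k_1}\, (h_0^{-N} s_1 h_0^{N})\, g_0^{k_2}\cdots g_0^{k_m}\, (h_0^{-N} s_m),\qquad N = N(n).
\]
When every $s_i$ lies in $\bF_2 = \langle g_0\rangle * \langle h_0\rangle$, this is literally the computation of Proposition~\ref{F2-is-selfless} and the word is nontrivial. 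The real task is to promote this to arbitrary $s_i \in G$, and here I would argue geometrically: the orbit path $o,\, s_0 h_0^{N} o,\, s_0 h_0^N g_0^{k_1} o, \dots$ alternately runs a distance $\asymp N\,\tau(h_0)$ along translates of $B_0$ (the $h_0^{\pm N}$ blocks) and a distance $\asymp |k_i|\,\tau(g_0)$ along translates of $A_0$ (the $g_0^{k_i}$ blocks), where $\tau$ denotes translation length, interrupted by the insertions $s_i$. The claim to establish is that this concatenation is a global $(\lambda,\epsilon)$-quasi-geodesic, so that its endpoint $\phi_n(W)\cdot o$ is far from $o$ and hence $\phi_n(W)\ne e$.

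The tool for the quasi-geodesic claim is a quantitative version of Wenyuan Yang's admissible-path criterion \cite[Corollary 3.4]{YANG_2014}: a path that alternately travels along strongly contracting sets for at least a threshold length and departs from them with bounded projection is automatically a quasi-geodesic. Verifying the threshold on the contracting blocks is routine since their lengths grow with $N(n)$, and the Lipschitz quasi-retract onto $\bF_2$ \cite{dahmani2017hyperbolically} converts the $(X\cup\{a\})$-length bound on $W$ into control of these metric quantities. The main obstacle — and the technical heart of the proof — is the bounded-projection hypothesis on the insertions $s_i$: these are constrained only by $|s_i|_X \le n$ and so live at the \emph{same} scale as the contracting blocks, so they cannot be dismissed as uniformly bounded. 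To control them I would use acylindricity, which caps the number of group elements that can fellow-travel a given axis over a long distance: an $s_i$ of length $\le n$ can shadow a translate of $A_0$ or $B_0$ only over a proportion that is forced below Yang's projection threshold once $N(n)$ grows linearly in $n$, while $E(g_0) = \langle g_0\rangle$ and $E(h_0) = \langle h_0\rangle$ rule out the degenerate scenario in which some $s_i$ is essentially a power of $g_0$ or $h_0$ aligned so as to cancel an adjacent block — the obstruction already visible in the commutator $a g_0 a^{-1} g_0^{-1}$, which is why $c_n$ must be a \emph{conjugate} of $g_0$ rather than a power. Making every constant in this argument explicit in terms of $n$, the contraction constant, and the acylindricity function is exactly the quantitative strengthening of \cite[Corollary 3.4]{YANG_2014} that I would isolate in Appendix~\ref{appendix main}.
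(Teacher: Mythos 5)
Your high-level architecture --- a hyperbolically embedded $\bF_2$, the Lipschitz quasi-retract to compare word lengths, and a quantitative admissible-path criterion in the style of \cite[Corollary~3.4]{YANG_2014} --- is the same as the paper's, but your specific choice $\phi_n(a) = h_0^{N(n)}g_0h_0^{-N(n)}$ with $N(n)$ linear in $n$ opens a gap that your appeal to acylindricity does not close. There are two problems. First, in the regrouped word the $g_0^{k_i}$-blocks can have $|k_i| = 1$, so those segments have bounded length and never meet the admissible-path threshold; your claim that ``verifying the threshold on the contracting blocks is routine since their lengths grow with $N(n)$'' is true only for the $h_0^{\pm N}$-blocks. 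Nor can the $g_0^{k_i}$ be demoted to transition pieces, since $|k_i|$ can also be as large as $n$. Second, and more seriously, the insertions $s_i$ live at the \emph{same} metric scale as the conjugating blocks: as you yourself note, injectivity forces $|c_n|_X > n$, hence $N(n) \gtrsim n/(2|h_0|_X)$, and then an insertion such as $s_i = h_0^{N'}g_0^{-k_i}h_0^{-N''}$ with $N',N''$ close to $N$ has $X$-length commensurate with $n$ (so it fits, or nearly fits, in the ball) and produces near-total cancellation of the adjacent blocks --- its projection onto the neighboring translate of the $h_0$-axis is of order $N\tau(h_0)$, a definite fraction of the special segment itself. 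Any admissible-path criterion requires the projections of transition pieces to be dominated, by a definite margin, by the lengths of the special segments; with everything at scale $n$ there is no such separation, and the assertion that acylindricity ``caps the proportion of fellow-traveling below Yang's threshold'' is false at this scaling: the element above is an explicit length-$\lesssim n$ insertion that shadows a translate of the $h_0$-axis over distance of order $n$. (The map $\phi_n$ may well still be injective; it is the proof via this path decomposition that collapses.)

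The paper's proof engineers exactly the scale separation your sketch lacks, and uses one family of axes rather than two. It sets $g_n = b^nab^{-n}$ \emph{inside} the hyperbolically embedded $\bF_2$, so that $|g_n|$ itself grows and $E(g_n) = \ip{g_n}$ by almost malnormality; it then shows via the quasi-retract length comparison that \emph{every} nontrivial $h\in B_S(|g_n|_S^{1/2})$ lies outside $E(g_n)$ --- which structurally kills your ``degenerate scenario'' rather than arguing around it --- and, crucially, it sends the free generator not to a conjugate but to a \emph{large power}, $\phi_N(z) = g_{2CN^2}^{P(D(4CN^2+1))}$, where $P$ is the degree-$6$ polynomial threshold of Theorem \ref{thm:quantitative admissible path}. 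The image of any reduced word then alternates insertions confined to the square-root scale $B_S(|g_n|_S^{1/2})$ with powers of $g_n$ exceeding the polynomial threshold $P(|g_n|_S)$: short bounded-projection transitions separated by very long travel along translates of a single quasi-axis. (A side effect of letting $g_n$ grow is that the quasi-geodesic constant $\lambda \asymp |g_n|_S$ varies with $n$, which is precisely why the appendix tracks polynomial dependence on $\lambda$ throughout Yang's argument, whereas your fixed $g_0,h_0$ would have fixed constants but an inadmissible path structure.) To repair your proof you would have to replace the conjugate of a fixed element by a high power of a primitive element of growing length, or perform normal-form surgery absorbing $h_0$-power prefixes and suffixes of the $s_i$ into adjacent blocks; the first option is, in effect, the paper's construction.
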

\begin{proof}
    
    Fix a finite generating set $S$ for $G$. Since $G$ is acylindrically hyperbolic and has trivial finite radical, by \cite[Theorem~6.14]{dahmani2017hyperbolically}, there is a hyperbolically embedded copy of $\bF_2$ in $G$. Denote by $a,b$ the generators of $\bF_2$. Without loss of generality, assume $a,b\in S$. By the proof of \cite[Theorem~4.31]{dahmani2017hyperbolically}, there is a constant $K>0$ and a finite generating set $Y$ of $\bF_2$ such that for all $h\in \bF_2$, $|h|_Y \leq |h|_S$ and $|h|_S \leq K|h|_Y$. Since $\{a,b\}$ and $Y$ are both finite generating sets for $\bF_2$, we have that $|h|_{a,b}$ and $|h|_{Y}$ are quasi-equivalent. Therefore there are constants $C,D$ such that whenever $h\in\bF_2,$ $|h|_{\{a,b\}}\leq C|h|_S$ and $|h|_S \leq D|h|_{\{a,b\}}$.

    Set $g_n = b^nab^{-n}$. We note that each $g_n$ is primitive. Note that for all $n,$ $\bF_2 = \ip{b}*\ip{g_n}$. By hyperbolicity of free factors (\cite[Example~2.2(c)]{dahmani2017hyperbolically}) and transitivity of hyperbolic embeddings (\cite[Proposition~4.35]{dahmani2017hyperbolically}) we have that $\ip{g_n}$ is hyperbolically embedded in $G$. In particular, since $a$ has infinite order and is contained inside a hyperbolically embedded cyclic subgroup, \cite[Theorem~1.4]{osin2016acylindrically} says that there is an acylindrical action of $G$ on a hyperbolic space $X$ such that $a$ is loxodromic. It follows that $g_n$ is loxodromic for each $n$ too. By almost malnormality of hyperbolically embedded subgroups (\cite[Proposition~4.33]{dahmani2017hyperbolically}), \cite[Remark~6.2]{dahmani2017hyperbolically}, and \cite[Corollary~6.6]{dahmani2017hyperbolically}, it follows that $E(g_n) = \ip{g_n}$ for each $n$. 
    
    We note that for all $n\ge \frac{1}{2}C^2D$ and any $h\in B_S({|g_n|_S^{1/2}})\setminus\{e\}$ (the $|g_n|_S^{1/2}$-ball around the identity in the Cayley graph of $G$ with the generating set $S$), we have $h\not\in E(g_n)$. Indeed, $|g_n|_{\{a,b\}} = 2n+1 > C^2D$ which implies that either $h\not\in\bF_2$ or 
    $$|h|_{\{a,b\}} \leq C|h|_S \leq C|g_{{n}}|_S^{1/2} \leq C\sqrt{D}|g_{{n}}|^{1/2}_{\{a,b\}} < |g_n|_{\{a,b\}},$$
    while clearly every element $g'\in E(g_n)\setminus\{e\}$ is an element of $\bF_2$ with $|g'|_{\{a,b\}} \geq |g_n|_{\{a,b\}}$.

    Since all of the $g_n$ are conjugate, they all have equal stable length. Therefore we can apply Theorem \ref{thm:quantitative admissible path} to get a degree 6 polynomial $P$ such that whenever $h_1,\ldots,h_m\in B_S({|g_n|_S^{1/2}})$ and $|k_1|,\ldots,|k_m| \geq P(|g_n|_S)$ we have $h_1g_n^{k_1}\cdots h_mg_n^{k_m}\neq e$. 

    We define $\phi_N:G*\ip{z}\to G$ by $\phi_N|_G = \id$ and $\phi_N(z) = g_{2CN^2}^{P(D(4CN^2+1))}$. We can now conclude as in the proof of Proposition \ref{F2-is-selfless}. Clearly $\phi_N$ satisfies the subexponential growth condition, since $|g_{2CN^2}|_S \leq D|g_{2CN^2}|_{a,b} = D(4CN^2+1).$ It remains to show that $\phi_n$ is injective on $B_{S \cup\{z\}}(N)$. It suffices to show that
    $$h_1\ell_1\cdots h_{m}\ell_{m} \neq e$$
    for all $h_i \in G$ such that $|h_i|_{S} \le 2N$ for all $1 \le i \le m$ and $h_i\neq e$ for all $1 < i < m$, and for all $\ell_i = \phi_N(z)^{p_i}$ where the $p_i$ are nonzero integers. But this follows from the previous paragraph's assertion, since, taking $n = 2CN^2,$ we have $h_i\in B_S(2N) \subset B_S(\sqrt{(2n+1)/C}) \subset B_S(|g_n|_S^{1/2})$ while the powers of $g_n$ are larger, in magnitude, than $P(D(2n+1))$ and thus are also larger than $ P(|g_n|_S)$. (Here, we assume $P$ is increasing, which is true for sufficiently large values.)

\end{proof}

\subsection{Proving strict comparison for $C^*_r(G)$}

\begin{defn}
    Let $G\leq \Gamma$ be countable discrete groups. We say that $\Gamma$ is \emph{existentially $C^\ast$-residually-$G$} (terminology motivated by \cite{louder2022strongly}) if for all finite sets $S\subset \Gamma$ and $\ee>0,$ there is a homomorphism $\phi:\Gamma \to G$ such that $\phi|_G = \id_G$ and for all $z\in\bC\Gamma$ supported on $S$ with $\|z\|_1 = 1,$ we have $\|\lambda_G(\phi(z))\| \leq \|\lambda_\Gamma(z)\| + \ee.$
\end{defn}

\begin{thm}\label{rapiddecaytrick}
    Suppose $(G,X)$ is selfless and has rapid decay. Then $G * \ip{a} \cong G * \bZ$ is existentially $C^*$-residually-$G$.
\end{thm}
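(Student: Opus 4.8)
The plan is to take the homomorphism witnessing existential $C^*$-residuality to be one of the selflessness maps $\phi_n \colon G * \langle a\rangle \to G$, for $n$ chosen suitably large depending on $S$ and $\varepsilon$. The difficulty is that a single application of rapid decay is too lossy: if $z$ is supported on $S \subseteq B_{X\cup\{a\}}(R)$ and $\phi_n$ is injective on $B_{X\cup\{a\}}(n)$ with $n\geq R$, then $\phi_n(z)$ has the same coefficients as $z$ (so $\|\phi_n(z)\|_2 = \|z\|_2$) and is supported in $B_X(f(n))$, whence rapid decay only gives $\|\lambda_G(\phi_n(z))\| \leq P(f(n))\|z\|_2$, with a factor $P(f(n))$ that blows up. To defeat this I would use multiplicativity of the operator norm under the $*$-operation, in the spirit of the Louder--Magee argument: pass to high powers so that the polynomial factor is spread over a large exponent and disappears in the limit.

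Concretely, fix $R$ with $S\subseteq B_{X\cup\{a\}}(R)$ and let $z\in\mathbb{C}\Gamma$ be supported on $S$ with $\|z\|_1 = 1$, where $\Gamma = G*\langle a\rangle$. For each $k$ consider $(z^*z)^k$, which is positive and supported in $B_{X\cup\{a\}}(2kR)$. Since $\phi_n$ is a $*$-homomorphism, $\lambda_G(\phi_n(z))^*\lambda_G(\phi_n(z)) = \lambda_G(\phi_n(z^*z))$, and iterating gives the exact identity
\[
\|\lambda_G(\phi_n(z))\|^{2k} = \bigl\|\lambda_G\bigl(\phi_n((z^*z)^k)\bigr)\bigr\|.
\]
Now choose $n\geq 2kR$, so that $\phi_n$ is injective on the support of $(z^*z)^k$; this preserves coefficients, giving $\|\phi_n((z^*z)^k)\|_2 = \|(z^*z)^k\|_2$, and places the support inside $B_X(f(n))$. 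Rapid decay for $(G,X)$ then yields
\[
\bigl\|\lambda_G\bigl(\phi_n((z^*z)^k)\bigr)\bigr\| \leq P(f(n))\,\|(z^*z)^k\|_2 \leq P(f(n))\,\|\lambda_\Gamma(z)\|^{2k},
\]
where the last inequality uses $\|w\|_2 = \|\lambda_\Gamma(w)\delta_e\| \leq \|\lambda_\Gamma(w)\|$ together with $\|\lambda_\Gamma((z^*z)^k)\| = \|\lambda_\Gamma(z)\|^{2k}$. Taking $2k$-th roots gives $\|\lambda_G(\phi_n(z))\| \leq P(f(n))^{1/2k}\,\|\lambda_\Gamma(z)\|$.

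It remains to balance $n$ and $k$ so that $P(f(n))^{1/2k}\to 1$. Here I would use the defining property $\liminf_n f(n)^{1/n} = 1$: choose a subsequence $n_j$ with $f(n_j)^{1/n_j}\to 1$, and set $k_j = \lfloor n_j/(2R)\rfloor$, so that $2k_jR \leq n_j$ (validating the injectivity step) while $n_j/(2k_j)\to R$. Writing $d = \deg P$, one has $\tfrac{1}{2k_j}\log P(f(n_j)) \sim d\,\tfrac{n_j}{2k_j}\,\log\!\bigl(f(n_j)^{1/n_j}\bigr) \to dR\cdot 0 = 0$, so $P(f(n_j))^{1/2k_j}\to 1$. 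Hence for $j$ large, $\|\lambda_G(\phi_{n_j}(z))\| \leq (1+\varepsilon)\|\lambda_\Gamma(z)\| \leq \|\lambda_\Gamma(z)\| + \varepsilon$, the last step using $\|\lambda_\Gamma(z)\| \leq \|z\|_1 = 1$. Crucially, the bound $P(f(n_j))^{1/2k_j}$ depends only on $S$ (through $R$) and not on the particular $z$, so a single $\phi_{n_j}$ works uniformly for all $z$ supported on $S$ with $\|z\|_1 = 1$, as required.

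I expect the main obstacle to be precisely this parameter balancing: the hypothesis only provides a $\liminf$ (not a limit), so one must pass to a subsequence of $n$'s along which $f(n)^{1/n}$ is controlled, while simultaneously keeping $n$ comparable to $2kR$ so that the exponent $2k$ is large enough to absorb $P(f(n))$. One also has to be careful that all estimates --- particularly the $\ell^2$-norm preservation under $\phi_n$ and the passage to the $2k$-th power --- are genuinely uniform over the (infinite) family of admissible $z$, which is what makes the single map $\phi_{n_j}$ serve simultaneously for the whole support set $S$.
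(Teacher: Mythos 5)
Your proof is correct and takes essentially the same route as the paper's: the Louder--Magee power trick $\|\lambda_G(\phi_n(z))\|^{2k} = \|\lambda_G(\phi_n((z^*z)^k))\|$, preservation of the $\ell^2$-norm via injectivity of $\phi_n$ on $B_{X\cup\{a\}}(2kR)$, rapid decay applied to the image supported in $B_X(f(n))$, and the balancing $P(f(n))^{1/2k}\to 1$. The only deviations are minor: you correctly observe that the final bound $P(f(n_j))^{1/2k_j}$ is uniform over all admissible $z$, which lets you dispense with the paper's $\ell^1$-compactness $\varepsilon/3$-net over $S_Y(R)$ (a step that is indeed redundant given the uniformity), and your choice $k_j = \lfloor n_j/(2R)\rfloor$ along a subsequence with $f(n_j)^{1/n_j}\to 1$ handles the $\liminf$ (rather than limit) hypothesis a bit more explicitly than the paper's direct selection of $m$ with $P(f(2mR))^{1/2m} < 1+\varepsilon/3$.
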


\begin{proof}
We follow the proof of \cite[Theorem~1.6]{louder2022strongly}. Set $Y = X\cup\{a\}$ to be our finite generating set of $\Gamma := G * \bZ$. Let $\ee>0$ and let $S\subset \Gamma$ be a finite subset. Then there is some $R>0$ such that $S\subset B_Y(R)$. Let $S_Y(R)\subset \bC\Gamma$ denote the elements supported on $B_Y(R)$ with $\ell^1$ norm equal to 1. Using compactness of $S_Y(R)$ with respect to the $\ell^1$ norm, we find a finite $\frac{\ee}{3}$-net $\{a_i\}_{i\in I}$ for $S_Y(R)$. That is, every element in $S_Y(R)$ would be within $\ee/3$ distance of one of the $a_i$’s. It now suffices to prove the existence of a homomorphism $\phi:\Gamma \to G$ such that $\phi|_G=\id_G$ and that, for all $i\in I,$

    $$\|\lambda_G(\phi(a_i))\| \leq \|\lambda_\Gamma(a_i)\| + \frac{\ee}{3}.$$

    Since $G$ is selfless, we get a function $f:\bN\to\bR_+$ such that $\liminf_n f(n)^{\frac{1}{n}} = 1$ and homomorphisms $\phi_n: \Gamma\to G$ which are epimorphisms, are identity on $G,$ are injective on $B_Y(n),$ and such that $\phi_n(B_Y(n)) \subset B_X(f(n))$. Using rapid decay of $G$ we get a polynomial $P$ such that $\left\|\lambda_G(a)\right\|\le P(r)\|a\|_2$ for all $a$ supported on $B_X(r)$. Now, as $\liminf_n f(n)^{\frac{1}{n}}=1$, we may pick $m$ such that $P(f(2mR))^{\frac{1}{2m}} < 1+ \frac{\ee}{3}$. Define $b_i = \phi_{2mR}(a_i)$ for each $i\in I$.

    Note that 
    $\|\lambda_\Gamma(a_i)\|^{2m} =\|\lambda_\Gamma(a_i^*a_i)^m\|$
    and similarly $\|\lambda_G(b_i)\|^{2m} =\|\lambda_G(b_i^*b_i)^m\|$. Since each $a_i$ is supported on $B_Y(R)$, $(a_i^*a_i)^m$ is supported on $B_Y(2mR)$, and hence each $(b_i^*b_i)^m = \phi_{2mR}((a_i^*a_i)^m) $ is supported on $B_X(f(2mR))$. Since $\phi_{2mR}$ is injective on $B_Y(2mR)$, we have that $\|(b_i^*b_i)^m\|_2 = \|(a_i^*a_i)^m\|_2$. Therefore

    \begin{align*}
        \|\lambda_G(b_i)\|^{2m} &= \|\lambda_G(b_i^*b_i)^m\| \\
        &\leq P(f(2mR))\|(b_i^*b_i)^m\|_2 \\
        &= P(f(2mR))\|(a_i^*a_i)^m\|_2 \\
        &\leq P(f(2mR))\|\lambda_\Gamma(a_i)\|^{2m}.
    \end{align*}
    Since we assumed that $\|a_i\|_1 = 1,$ we conclude that
    \begin{align*}
        \|\lambda_G(b_i)\| &\leq P(f(2mR))^{\frac{1}{2m}}\|\lambda_\Gamma(a_i)\|\\
        &\leq (1+\frac{\ee}{3})\|\lambda_\Gamma(a_i)\| \\
        &\leq \|\lambda_\Gamma(a_i)\| + \frac{\ee}{3}.
    \end{align*}
    
\end{proof}

\begin{rem}\label{simple} 
    In the above proof, the injectivity of $\phi_n$ on the radius $n$ ball $B_{X\cup\{a\}}(n)$ is used only to obtain the equality $\|(b_i^*b_i)^m\|_2 = \|(a_i^*a_i)^m\|_2$. If instead $\phi_n$ is $g(n)$-to-1 on the radius $n$ ball for some function $g:\bN\to\bN$ such that $\lim g(n)^{\frac{1}{n}} = 1,$ then by the Cauchy-Schwarz inequality we would still have $\|(b_i^*b_i)^m\|_2 \leq \sqrt{g(2mR)}\|(a_i^*a_i)^m\|_2$ and the rest of the proof would proceed as above. To summarize, in the definition of selfless, we can assert the weaker condition that $\phi_n$ is $g(n)$-to-1 for some subexponential function $g$. Note that in this setting, the group homomorphism $\phi=(\phi_n)_\cU : G*\bZ \to G^\cU$ is no longer necessarily injective, which isn't a requirement anyway to prove Proposition \ref{resid-G-implies-embedding}.  We suspect that this approach might also help resolve the question of strict comparison in broader generality.
\end{rem}

\begin{prop}\label{resid-G-implies-embedding}
    If $\Gamma$ is existentially $C^\ast$-residually-$G$, then for some free ultrafilter $\cU$ on $\bN$ there is a $\ast$-homomorphism $\phi:C^*_r(\Gamma) \to C^*_r(G)^\cU$ such that $\phi \circ \iota = \Delta$ where $\iota$ is the inclusion of $C_r^*(G)\subset C_r^*(\Gamma)$ arising from the group inclusion $G\le \Gamma$ and $\Delta$ is the diagonal embedding $C_r^*(G)\subset C_r^*(G)^\cU$.
\end{prop}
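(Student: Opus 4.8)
The plan is to build the desired $*$-homomorphism first on the dense subalgebra $\bC\Gamma \subset C^*_r(\Gamma)$, landing in the ultrapower $C^*_r(G)^\cU$, then show it is contractive for the reduced operator norm and extend by continuity. The group homomorphisms supplied by the definition of existential $C^*$-residuality are only group-theoretic, so the single way they see the operator norm is through the inequality $\|\lambda_G(\phi(z))\| \le \|\lambda_\Gamma(z)\| + \ee$; the entire argument is organized so that feeding a sequence of these into the ultrapower makes the error terms vanish in the limit. Any free ultrafilter will work.

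Concretely, I would first fix an increasing exhaustion $S_1 \subset S_2 \subset \cdots$ of $\Gamma$ by finite sets with $\bigcup_n S_n = \Gamma$, and set $\ee_n = 1/n$. Applying the hypothesis to each pair $(S_n,\ee_n)$ yields group homomorphisms $\psi_n : \Gamma \to G$ with $\psi_n|_G = \id_G$ and, by homogeneity (rescaling to the $\ell^1$-normalized case),
\[
\|\lambda_G(\psi_n(z))\| \le \|\lambda_\Gamma(z)\| + \ee_n\|z\|_1
\]
for every $z \in \bC\Gamma$ supported on $S_n$. I then define $\phi_0 : \bC\Gamma \to C^*_r(G)^\cU$ by $\phi_0(z) = [(\lambda_G(\psi_n(z)))_n]_\cU$. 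This is well defined because each sequence is uniformly bounded: group homomorphisms are $\ell^1$-contractive and $\|\cdot\| \le \|\cdot\|_1$, so $\|\lambda_G(\psi_n(z))\| \le \|\psi_n(z)\|_1 \le \|z\|_1$ for every $n$. Since each $z \mapsto \lambda_G(\psi_n(z))$ is a $*$-homomorphism $\bC\Gamma \to C^*_r(G)$, the induced coordinatewise map into $\ell^\infty(\bN, C^*_r(G))$ is one as well, and composing with the quotient onto $C^*_r(G)^\cU$ shows $\phi_0$ is a $*$-homomorphism.

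The key step is the estimate $\|\phi_0(z)\|_{C^*_r(G)^\cU} \le \|\lambda_\Gamma(z)\|$ for all $z \in \bC\Gamma$. Fixing $z$ supported on a finite set $S$, we have $S \subset S_n$ for all $n \ge N$, so for those $n$ the displayed inequality applies; since $\ee_n \to 0$ and $\cU$ is free (hence contains every cofinite set), the ultrapower norm satisfies
\[
\|\phi_0(z)\|_{C^*_r(G)^\cU} = \lim_\cU \|\lambda_G(\psi_n(z))\| \le \|\lambda_\Gamma(z)\|.
\]
Thus $\phi_0$ is contractive for the norm $\bC\Gamma$ inherits from $C^*_r(\Gamma)$, and as $\bC\Gamma$ is dense it extends uniquely to a bounded map $\phi : C^*_r(\Gamma) \to C^*_r(G)^\cU$; multiplicativity and $*$-compatibility pass to norm limits, so $\phi$ is a $*$-homomorphism. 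Finally, for $g \in G$ one computes $\phi(\lambda_\Gamma(g)) = [(\lambda_G(\psi_n(g)))_n]_\cU = [(\lambda_G(g))_n]_\cU = \Delta(\lambda_G(g))$ using $\psi_n|_G = \id_G$, and by linearity and density this gives $\phi \circ \iota = \Delta$.

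I do not expect a serious obstacle here: the argument is soft once the hypothesis is available, since all of the genuine difficulty has been front-loaded into establishing existential $C^*$-residuality (Theorem \ref{rapiddecaytrick}). The one point needing care is the direction of the inequality combined with the role of the free ultrafilter, namely arranging that for each fixed $z$ the error term $\ee_n$ is eventually active (that is, $z$ is supported inside $S_n$ for $\cU$-almost every $n$), which is exactly what the exhaustion and the cofiniteness of members of $\cU$ guarantee. A secondary routine check is that contractivity on a dense $*$-subalgebra yields a $*$-homomorphism on the completion, which holds since both the product and the involution are norm-continuous.
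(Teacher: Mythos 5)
Your proposal is correct and follows essentially the same route as the paper's proof: extract a sequence of homomorphisms from the definition, assemble them into a map $\bC\Gamma \to C^*_r(G)^\cU$, verify contractivity via the ultrapower norm formula, and extend by continuity. The only difference is that you carefully spell out details the paper leaves implicit --- the exhaustion $S_n$ with $\ee_n \to 0$ yielding a single sequence that works for every $z$ simultaneously, the rescaling from $\ell^1$-normalized elements, and the boundedness check via $\ell^1$-contractivity --- all of which are exactly the right justifications.
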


\begin{proof}
    Since $\Gamma$ is existentially $C^*$-residually-$G$, there are epimorphisms $\phi_n:\Gamma \to G$ which are identity on $G$ and such that for all $z\in\bC\Gamma$ with $\|z\|_1 = 1$ we have 
    $$\lim_{n\to\cU}\|\lambda_G(\phi_n(z))\| \leq \|\lambda_\Gamma(z)\|. $$
    In other words, for all $z\in\bC\Gamma$ with $\|z\|_1 = 1$ we have
    $$\|(\phi_n(z))_\cU\|_{C^*_r(G)^\cU} \leq \|z\|_{C^*_r(\Gamma)}.$$
    Since the map $\phi = (\phi_n)_\cU$ is a homomorphism, we immediately get the previous inequality for all $z\in \bC\Gamma$. Then the inequality implies that we can extend $\phi$ continuously to all of $C_r^*(\Gamma)$; thus, $\phi$ is the desired $*$-homomorphism.
\end{proof}

\begin{cor}\label{Final corollary}
    If $G * \bZ$ is existentially $C^*$-residually-$G$, then $C^*_r(G)$ is selfless in the sense of \cite{robert2023selfless}. In particular, $C^*_r(G)$ has strict comparison.
\end{cor}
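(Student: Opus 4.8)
=== PROOF PROPOSAL ===

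The plan is to reduce the statement to Robert's machinery via the $*$-homomorphism produced in Proposition \ref{resid-G-implies-embedding}. Recall the definition of selflessness for a $C^*$-algebra from \cite{robert2023selfless}: a $C^*$-algebra $A$ is selfless if there is a Haar unitary $u$ in the norm-ultrapower $A^\cU$ that is freely independent (with respect to the ultrapower of the unique trace) from the diagonal copy $\Delta(A)\subset A^\cU$. So the goal is to manufacture, inside $C^*_r(G)^\cU$, a Haar unitary that is free from $\Delta(C^*_r(G))$, using only the hypothesis that $G*\bZ$ is existentially $C^*$-residually-$G$.

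First I would invoke Proposition \ref{resid-G-implies-embedding} directly: the hypothesis gives, for some free ultrafilter $\cU$, a $*$-homomorphism $\phi:C^*_r(G*\bZ)\to C^*_r(G)^\cU$ satisfying $\phi\circ\iota = \Delta$, where $\iota:C^*_r(G)\hookrightarrow C^*_r(G*\bZ)$ is induced by the inclusion $G\le G*\bZ$ and $\Delta$ is the diagonal embedding. The key observation is that $C^*_r(G*\bZ)\cong C^*_r(G)*C^*_r(\bZ)$ is a reduced free product (as recorded in the Notation section), so the generator $a$ of the $\bZ$ factor maps to a canonical Haar unitary $u:=\phi(\lambda(a))\in C^*_r(G)^\cU$, and in $C^*_r(G*\bZ)$ this Haar unitary is, by construction of the reduced free product, freely independent from the copy of $C^*_r(G)$ with respect to the canonical trace.

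The next step is to verify that free independence survives under $\phi$ at the level of the ultrapower trace. Writing $\tau$ for the unique trace on $C^*_r(G)$ and $\tau_\cU$ for its ultrapower trace on $C^*_r(G)^\cU$, I would check that $\tau_\cU\circ\phi$ agrees with the canonical trace on $C^*_r(G*\bZ)$. This is where I would use that the maps $\phi_n:G*\bZ\to G$ are identity on $G$ together with the fact that the canonical trace on a reduced group $C^*$-algebra is just evaluation of the coefficient at the identity: for a group element $w\in G*\bZ$, the trace of $\lambda(w)$ is $1$ if $w=e$ and $0$ otherwise, and since each $\phi_n$ fixes $G$ and is injective on large balls, $\phi_n(w)=e$ forces $w=e$ (for $n$ large), so $\tau(\phi_n(\lambda(w)))\to \delta_{w,e}$ along $\cU$. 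Hence $\tau_\cU\circ\phi$ restricted to the group algebra equals the canonical trace, and by continuity this holds on all of $C^*_r(G*\bZ)$. Consequently, the alternating-centered-products condition defining free independence is preserved: since $\phi$ is a trace-preserving $*$-homomorphism with $\phi\circ\iota=\Delta$, the pair $\bigl(u,\ \Delta(C^*_r(G))\bigr)$ inside $\bigl(C^*_r(G)^\cU,\ \tau_\cU\bigr)$ is freely independent because its preimage $\bigl(\lambda(a),\ \iota(C^*_r(G))\bigr)$ is. This exhibits $u$ as a Haar unitary freely independent from the diagonal, which is precisely selflessness of $C^*_r(G)$ in the sense of \cite{robert2023selfless}. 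The final sentence then follows from Robert's theorem that selflessness implies strict comparison (this is the ``soft ultrapower argument'' of \cite{robert2023selfless}, applicable here since $C^*_r(G)$ is unital, simple, exact with unique trace).

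The main obstacle I anticipate is the trace-preservation verification in the middle step: one must be careful that $\tau_\cU\circ\phi$ genuinely computes the free-product trace and not merely a trace dominating free independence. The subtlety is that $\phi$ is built from the maps $\phi_n$ which are only eventually injective on balls of growing radius, so the identity $\tau_\cU(\phi(\lambda(w)))=\delta_{w,e}$ must be argued via this eventual injectivity rather than injectivity of $\phi$ itself (indeed $\phi$ need not be injective, as Remark \ref{simple} notes). Once the trace is correctly identified, free independence transfers formally, so I expect the remaining bookkeeping to be routine.
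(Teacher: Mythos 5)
Your overall architecture matches the paper's: invoke Proposition \ref{resid-G-implies-embedding} to get $\phi:C^*_r(G*\bZ)\cong C^*_r(G)*C^*_r(\bZ)\to C^*_r(G)^\cU$ with $\phi\circ\iota=\Delta$, then feed this into Robert's criterion --- the paper cites \cite[Theorem~2.6]{robert2023selfless}, which asks for exactly such a map, but \emph{injective}. The genuine gap is in your trace-preservation step: you argue $\tau_\cU(\phi(\lambda(w)))=\delta_{w,e}$ using that ``each $\phi_n$ fixes $G$ and is injective on large balls, so $\phi_n(w)=e$ forces $w=e$.'' But injectivity on balls belongs to Definition \ref{selfless defn} (selflessness of the \emph{group}), which is a hypothesis of Theorem \ref{rapiddecaytrick}, not of this corollary. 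The corollary assumes only that $G*\bZ$ is existentially $C^*$-residually-$G$, and that definition demands just $\phi_n|_G=\id_G$ plus the one-sided norm inequality $\|\lambda_G(\phi_n(z))\|\leq\|\lambda_\Gamma(z)\|+\ee$; nothing in it prevents $\phi_n(w)=e$ for some $w\neq e$ outside $G$, and indeed Remark \ref{simple} explicitly contemplates non-injective $\phi_n$, noting that injectivity ``isn't a requirement anyway to prove Proposition \ref{resid-G-implies-embedding}.'' The inequality also cannot detect such collapsing: if $\phi_n(w)=e$, the left-hand side only gets smaller, so no contradiction arises. As written, then, your middle step does not follow from the stated hypothesis.

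The gap is repairable, and the repair shows where your route and the paper's converge. The paper gets injectivity of $\phi$ for free: $G*\bZ$ is $C^*$-simple for any infinite $G$ (Corollary 3 of \cite{DE_LA_HARPE_2011}), so any $*$-homomorphism with domain $C^*_r(G*\bZ)$ is automatically injective, and Robert's Theorem 2.6 applies with no trace computation at all. Your trace identity is nevertheless true, but for a different reason than the one you gave: $\tau_\cU\circ\phi$ is automatically a tracial state on $C^*_r(G*\bZ)$, and $G*\bZ$ has a \emph{unique} tracial state (same Powers-type results), so $\tau_\cU\circ\phi$ must equal the canonical trace; since the canonical trace on a reduced group $C^*$-algebra is faithful, this forces $\phi$ injective, after which your free-independence transfer is formal. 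One more point to be careful about: ``Haar unitary freely independent from the diagonal with respect to $\tau_\cU$'' is not literally Robert's definition, because $\tau_\cU$ need not be faithful on the ultrapower --- selflessness requires freeness \emph{in the reduced sense}, i.e., that $\phi$ be an embedding of the reduced free product. Free independence with respect to a possibly non-faithful state is a priori weaker; it is precisely the injectivity of $\phi$ (obtained via simplicity, as in the paper, or via trace-preservation plus faithfulness, as in the repaired version of your argument) that closes this distinction.
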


\begin{proof}
    By \cite[Theorem~2.6]{robert2023selfless}, it suffices to show that, for some free ultrafilter $\cU$ on $\bN$ there is an injective $\ast$-homomorphism $\phi$ from the reduced free product $C^*_r(G) * C_r^*(\bZ)$ to $C^*_r(G)^\cU$ such that, composing with the canonical inclusion $C_r^*(G)\subset C_r^*(G) * C_r^*(\bZ)$, we obtain the diagonal embedding $C_r^*(G)\subset C_r^*(G)^\cU$. The content of Proposition \ref{resid-G-implies-embedding} is precisely producing such a $*$-homomorphism, and it is injective because $G*\bZ$ is $C^*$-simple for any infinite group $G$, (see for example Corollary 3 in \cite{DE_LA_HARPE_2011}), and as a consequence, any $*$-homomorphism with domain $C^*_r(G*\bZ)$ will be automatically injective.
\end{proof}

\begin{rem}\label{rem:subsemigroup}
    In \cite{DYKEMA1999591} and \cite{Osinacylindrical}, the existence of sufficiently many free sub-semigroups of $G$ with a form of the rapid decay property, which they call the $\ell^2$-spectral radius property, is sufficient to show stable rank 1 of $C^*_r(G).$ In our setting, to show that $C^*_r(G)$ is selfless and therefore has strict comparison, we need an embedding of $C^*_r(G * \bZ)$ into $C^*_r(G)^\cU$ which requires estimating the operator norm on all of $\bC(G*\bZ)$ simultaneously. This is precisely why removing the rapid decay assumption is apriori difficult. 
\end{rem}

\begin{proof}[Proof of Theorem \ref{main2}]
    If $G$ is as in the hypothesis of Theorem \ref{main2}, then by Theorem \ref{mainselfless} and Theorem \ref{rapiddecaytrick} $G*\mathbb{Z}$ is existentially $C^*$-
residually-$G$, so that by Corollary \ref{Final corollary} $C^*_r(G)$ has strict comparison, and is in fact selfless.
\end{proof}

\subsection{On the particular case of free groups}\label{mageesubsection} After initial versions of our paper were released, M. Magee pointed out to us another proof via \cite{louder2022strongly} in the case of non-abelian free groups (avoiding Proposition \ref{F2-is-selfless}). We thank him sincerely for allowing us to include this here. 

\begin{thm}[\cite{louder2022strongly}]
    Let $n>1$ and $a\in \mathbb{F}_n$ be a non identity element. Denote by $G= \mathbb{F}_n*_{\langle a\rangle}\langle a\rangle\times \mathbb{Z}$, also known as an \emph{extension of centralizers} of $a$. Then $G$ is existentially $C^*$-residually-$\mathbb{F}_n$. 
\end{thm}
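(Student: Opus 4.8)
The plan is to run the strategy of Theorem~\ref{rapiddecaytrick}, but with the free‑product retractions of Proposition~\ref{F2-is-selfless} replaced by the natural retractions of an extension of centralizers, and to locate the quantitative content in the Louder--Magee quantitative Baumslag lemma. Write $G = \langle \mathbb{F}_n, t \mid [t,a]=1\rangle = \mathbb{F}_n *_{\langle a\rangle}(\langle a\rangle\times\langle t\rangle)$, fix a free generating set $X$ of $\mathbb{F}_n$, and set $Y = X\cup\{t\}$; I verify the definition of existentially $C^*$‑residually‑$\mathbb{F}_n$ with ambient group $G$ and subgroup $\mathbb{F}_n$. First I would record the one structural constraint: any homomorphism $\psi\colon G\to\mathbb{F}_n$ with $\psi|_{\mathbb{F}_n}=\id$ must send $t$ into the centralizer $C_{\mathbb{F}_n}(a)=\langle a_0\rangle$, where $a_0$ is the primitive root of $a$ (so $a=a_0^s$). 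The natural family is therefore $\psi_N\colon G\to\mathbb{F}_n$, $\psi_N|_{\mathbb{F}_n}=\id$, $\psi_N(t)=a^N$. These respect the one relation $[t,a]=1$, fix $\mathbb{F}_n$, and by the amalgam normal form carry a reduced expression $g_0 t^{m_1}g_1\cdots t^{m_r}g_r$ (with $m_i\neq 0$ and interior $g_i\notin\langle a\rangle$) to $g_0 a^{Nm_1}g_1\cdots a^{Nm_r}g_r$. Since an element of $B_Y(n)$ has at most $n$ syllables, $\psi_N(B_Y(n))\subset B_X(n+nN|a|_X)$, so for $N=N(n)$ growing polynomially the image of the $n$‑ball grows subexponentially, matching the growth demanded in the selfless framework.

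Next I would set up the reduction exactly as in the proof of Theorem~\ref{rapiddecaytrick}. Given $\epsilon>0$ and finite $S\subset B_Y(R)$, pass to a finite $\ell^1$‑net and reduce to proving, for each fixed $z$ supported on $B_Y(R)$ with $\|z\|_1=1$, that $\|\lambda_{\mathbb{F}_n}(\psi_N(z))\|\le \|\lambda_G(z)\|+\epsilon$ for a suitable $N$. Applying the rapid decay property of $\mathbb{F}_n$ (Haagerup \cite{HaagerupGod}) to $\psi_N((z^*z)^m)=(\psi_N(z)^*\psi_N(z))^m$, which is supported on $B_X(f(2mR))$, gives $\|\lambda_{\mathbb{F}_n}(\psi_N(z))\|^{2m}\le P(f(2mR))\,\|\psi_N((z^*z)^m)\|_2$; after choosing $m$ with $P(f(2mR))^{1/2m}<1+\epsilon/3$ the whole matter comes down to controlling the single quantity $\|\psi_N((z^*z)^m)\|_2^2 = \sum_{g\in\ker\psi_N}[(z^*z)^{2m}]_g$ and comparing it to $\|(z^*z)^m\|_2^2 = [(z^*z)^{2m}]_e \le \|\lambda_G(z)\|^{4m}$.

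The crux is this $\ell^2$‑comparison, and here the argument must depart from Proposition~\ref{F2-is-selfless}: unlike in a free product, $\psi_N$ is \emph{not} injective on balls. The forced inclusion $\psi_N(t)\in\langle a_0\rangle$ gives $\psi_N(ta_0t^{-1})=a_0=\psi_N(a_0)$, so when $a$ is a proper power $\ker\psi_N$ already meets every large ball in exponentially many elements and the Cauchy--Schwarz fiber bound of Remark~\ref{simple} is unavailable. The remedy, supplied by \cite{louder2022strongly}, is a quantitative Baumslag lemma inside the target free group: there is a polynomial $P_0$ so that whenever the interior syllables $g_i$ lie off $\langle a_0\rangle$ and have length $\le L$, the word $g_0 a^{Nm_1}g_1\cdots a^{Nm_r}g_r$ is nontrivial (indeed reduces with controlled cancellation) as soon as $N|m_i|\ge P_0(L)$. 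Taking $N$ above this threshold at scale $4mR$ forces every nontrivial $g\in\ker\psi_N\cap B_Y(4mR)$ to contain an interior syllable in $\langle a_0\rangle$, confining the collapse to cosets of the subgroups $\langle a_0,t\rangle$, whose relevant generators are Haar unitaries in $C^*_r(G)$.

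I expect this last cancellation estimate to be the main obstacle, precisely because the exponential size of $\ker\psi_N$ rules out any counting argument: one cannot bound $\sum_{g\in\ker\psi_N}[(z^*z)^{2m}]_g$ by $[(z^*z)^{2m}]_e$ times the number of kernel elements. Instead one must use the controlled cancellation from the Baumslag lemma to show that the merged coefficients do not inflate the operator norm — morally, that collapsing $t$‑conjugates onto $\langle a_0\rangle$ identifies group elements that were Haar unitaries anyway — and to make this uniform in $m$ so that it survives the extraction of the operator norm through the rapid‑decay inequality. Feeding the resulting bound $\|\psi_N((z^*z)^m)\|_2^2\le (\text{subexponential in }m)\cdot\|\lambda_G(z)\|^{4m}$ back in yields $\|\lambda_{\mathbb{F}_n}(\psi_N(z))\|\le \|\lambda_G(z)\|+\epsilon$, i.e.\ existential $C^*$‑residual‑$\mathbb{F}_n$‑ness of $G$; via Proposition~\ref{resid-G-implies-embedding} and Corollary~\ref{Final corollary} this gives the alternative route to selflessness of $C^*_r(\mathbb{F}_n)$ announced in this subsection.
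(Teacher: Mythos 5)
For $a$ not a proper power --- which is the intended hypothesis here, since ``extension of centralizers'' in \cite{louder2022strongly} means amalgamation over the full centralizer $C_{\mathbb{F}_n}(a)$, a maximal cyclic subgroup --- your proposal is essentially the paper's proof written out in detail: the retractions $\psi_N(t)=a^N$ with polynomially growing images are exactly the maps of Proposition~1.8 of \cite{louder2022strongly}, and the rapid-decay extraction is Theorem~1.6 there, i.e.\ the argument of Theorem~\ref{rapiddecaytrick}. But in that case your blanket claim that ``$\psi_N$ is \emph{not} injective on balls'' is incorrect, and the detour it triggers is unnecessary: when $\langle a\rangle=\langle a_0\rangle$, the amalgam normal form has all interior syllables $g_i\notin\langle a\rangle=C_{\mathbb{F}_n}(a)$, so the quantitative Baumslag lemma you yourself quote gives that $\psi_N$ \emph{is} injective on $B_Y(n)$ once $N$ exceeds a polynomial in $n$, and the $\ell^2$-comparison $\|\psi_N((z^*z)^m)\|_2=\|(z^*z)^m\|_2$ holds on the nose, exactly as in Theorem~\ref{rapiddecaytrick} (or via the weaker fiber condition of Remark~\ref{simple}). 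No cancellation analysis beyond Baumslag is needed.

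The genuine gap is your final third, where you attempt to cover proper powers $a=a_0^s$, $s\ge 2$: the deferred estimate (``the merged coefficients do not inflate the operator norm'') is not merely the main obstacle, it is false, and the statement itself fails for $s\ge 3$. Put $u=ta_0t^{-1}$ and $z=\frac{1}{4}(a_0+a_0^{-1}+u+u^{-1})$. As you observe, every homomorphism $\phi$ with $\phi|_{\mathbb{F}_n}=\id$ sends $t$ into $\langle a_0\rangle$, hence $\phi(u)=a_0$ and $\phi(z)=\frac{1}{2}(a_0+a_0^{-1})$, which has norm $1$. On the other hand, $K=\langle a_0,u\rangle$ surjects onto $\mathbb{Z}/s \ast \mathbb{Z}/s$ (quotient $\langle a_0,t\rangle\cong\langle a_0,t\mid [t,a_0^s]\rangle$ by its central subgroup $\langle a\rangle$; the images of $a_0$ and $ta_0t^{-1}$ generate a free product of two copies of $\mathbb{Z}/s$ by normal form), which is non-amenable for $s\ge 3$; so by Kesten's criterion $\|\lambda_G(z)\|=\|\lambda_K(z)\|\le 1-\delta$ for a fixed $\delta>0$. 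Taking $S\supset\{a_0^{\pm1},u^{\pm1}\}$ and $\ee<\delta$ defeats every admissible $\phi$ simultaneously. Concretely, your hoped-for bound fails exponentially: since $z$ is self-adjoint, $\|\psi_N((z^*z)^m)\|_2^2=\|w^{2m}\|_2^2=\binom{4m}{2m}4^{-2m}$ with $w=\frac{1}{2}(a_0+a_0^{-1})$, which decays only like $m^{-1/2}$, whereas $\|\lambda_G(z)\|^{4m}\le(1-\delta)^{4m}$. The moral is that ``the collapsed elements were Haar unitaries anyway'' is not enough: $a_0$ and $u$ are each Haar unitaries, but jointly they generate a non-amenable subgroup of $G$ while their images coincide --- joint distributions, not individual ones, govern the norm. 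So the correct move is to read the theorem with $\langle a\rangle$ maximal cyclic, as in \cite{louder2022strongly}, rather than to seek a finer cancellation estimate. Finally, your closing sentence silently needs the degenerate case $a=e$ (i.e.\ $G=\mathbb{F}_n\ast\mathbb{Z}$) in order to invoke Corollary~\ref{Final corollary}; the theorem excludes it, and the paper bridges this with the $\mathbb{F}_3\le\mathbb{Z}\ast(\mathbb{Z}\times\mathbb{Z})$ observation following the statement, a step your write-up omits.
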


\begin{proof}
    We consider the maps defined in the proof of Proposition 1.8 of \cite{louder2022strongly}, proved using their quantitative Baumslag lemma. Within the proof, note that the maps $f$ constructed are actually identity on the base free group $\mathbb{F}_n$ (they are moreover composition of an automorphism fixing $\mathbb{F}_n$ and natural retract to $\mathbb{F}_n$). Then by following Theorem 1.6 of \cite{louder2022strongly}, the desired conclusion is reached.
\end{proof}

Note that for the applications described in this paper (via applying Proposition \ref{resid-G-implies-embedding}), we need the degenerate case where $a=1_{\mathbb{F}_n}$ in the above (which is for instance handled directly by our Proposition \ref{F2-is-selfless}). While this is not handled directly in the above argument, it is possible to deduce this case via the following observation: take $n=2$, $\mathbb{F}_2=\langle x,y\rangle$ and $a=y$, in the above Theorem. Then clearly, $G\cong\mathbb{Z}*(\mathbb{Z}\times \mathbb{Z})$, and let the generator commuting with $y$ be denoted $t$. Then the group generated by $x, y, txt$ is isomorphic to $\mathbb{F}_3$ as a subgroup of $G$, and contains the original $\mathbb{F}_2$. This can therefore serve to replace $G$ in the conclusion of the Theorem as needed. An identical argument works for any $n>2$ as well.

\appendix

\section{Quantitative version of Wenyuan Yang's admissible path lemma} \label{appendix main}

The aim of this section is to prove a quantitative version of W. Yang's admissible path lemma \cite[Corollary~3.4]{YANG_2014}, which applies to our setting. For our purposes, we will take a fixed action on a hyperbolic space but vary the loxodromic element and thus the quasi-geodesic constants $\lambda$. So throughout, we track dependence on $\lambda$. The following quantitative version of the Morse lemma is present in \cite[Section 5.2]{ghys2013groupes}; see \cite[Theorem~1.1]{SHCHUR2013815} for an explicit polynomial.

\begin{lem}
\label{lem:quantmorse}[Quantitative Morse Lemma]
    Let $X$ be a $\delta$-hyperbolic space, and let $\gamma$ be a $(\lambda,0)$-quasi-geodesic. Let $\sigma$ be a geodesic with the same endpoints as $\gamma$. Then $\gamma$ is contained in a $Q_1(\lambda)$-neighborhood of $\sigma$ with $Q_1$ a quadratic polynomial (which depends on $\delta$).
\end{lem}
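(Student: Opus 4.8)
The additive constant here is $0$, so this is exactly the classical stability (Morse) property for quasi-geodesics in a $\delta$-hyperbolic space; the only genuine task is to make the neighborhood constant explicit in $\lambda$. The plan is to follow the dyadic-subdivision proof as in \cite[Section~5.2]{ghys2013groupes} (see also the explicit estimates of \cite{SHCHUR2013815}) while tracking how each constant scales with $\lambda$. Note that only one inclusion is asserted, namely that $\gamma$ lies in a neighborhood of $\sigma$, so it suffices to bound $D := \sup_t d(\gamma(t),\sigma)$.

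First I would \emph{tame} $\gamma$: sampling at integer parameters and interpolating consecutive samples by geodesics produces a continuous rectifiable path $\bar\gamma$ with the same endpoints, which is still a quasi-geodesic with comparable constants. Because $d(\gamma(n),\gamma(n+1)) \le \lambda$ while $d(\gamma(m),\gamma(n)) \ge \frac{1}{\lambda}|m-n|$, the interpolant satisfies the length bound $\ell(\bar\gamma|_{[s,t]}) \le \Lambda\, d(\bar\gamma(s),\bar\gamma(t)) + \Lambda$ with $\Lambda = O(\lambda^2)$. This quadratic tameness constant is where the quadratic in the final bound ultimately originates.

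The geometric input is the \emph{exponential divergence} of geodesics: by bisecting a rectifiable path $\beta$ from $a$ to $b$ and repeatedly applying $\delta$-thinness of the triangles formed with the successive chords, one shows that every $w$ on the geodesic $[a,b]$ satisfies $d(w,\mathrm{im}\,\beta) \le \delta \log_2 \ell(\beta) + 1$; equivalently, if $\beta$ avoids the ball $B(w,R)$ for some $w\in[a,b]$, then $\ell(\beta) \ge 2^{(R-1)/\delta}$. I would then combine the two as follows. Let $x_0 = \bar\gamma(t_0)$ realize (up to an additive constant) the maximal distance $D$ to $\sigma$, and excise the maximal sub-arc $\beta = \bar\gamma|_{[t_1,t_2]}$ around $t_0$ that stays outside the $(D/2)$-neighborhood of $\sigma$, so that its endpoints $y,z$ lie at distance $D/2$ from $\sigma$. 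Using $\delta$-thinness of the quadrilateral spanned by $y,z$ and their nearest-point projections to $\sigma$, one locates a point $w$ on the chord $[y,z]$ within $O(\delta)$ of $\sigma$; the sub-arc $\beta$ then avoids $B(w, D/2 - O(\delta))$, so the divergence estimate forces $\ell(\beta) \ge 2^{\Omega(D/\delta)}$. On the other hand the tameness bound gives $\ell(\beta) \le \Lambda\, d(y,z) + \Lambda = O(\lambda^2 D)$. Comparing, $2^{\Omega(D/\delta)} \le O(\lambda^2 D)$, which yields $D = O(\delta \log \lambda)$; in particular $D \le Q_1(\lambda)$ for a suitable quadratic polynomial $Q_1$ depending on $\delta$, consistent with the explicit polynomial of \cite{SHCHUR2013815}.

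The main obstacle is the final geometric bookkeeping step: one must guarantee, uniformly and with constants depending only on $\delta$, that the excised sub-arc genuinely avoids a ball centered on its own chord $[y,z]$ of radius \emph{comparable to} $D$ rather than to $\delta$. This requires care precisely when the projections of $y$ and $z$ to $\sigma$ are close together, in which case the chord need not shadow $\sigma$ and the natural choice of $w$ sits at height $\approx D/2$ above $\sigma$ instead of near it. Handling this degenerate configuration (e.g.\ by recentering the avoided ball at the projection $m$ of $x_0$ and verifying $m$ is $O(\delta)$-close to $[y,z]$), together with confirming that the tameness constant is genuinely polynomial in $\lambda$, is the only place where real work is needed; the remainder is the standard Morse argument.
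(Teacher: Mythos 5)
There is a genuine gap, and it is fatal rather than cosmetic: the excision step cannot be repaired, because the intermediate bound it would produce, $D=\sup_t d(\gamma(t),\sigma)=O(\delta\log\lambda)$, is \emph{false}. Only the opposite direction, $\sup_{w\in\sigma}d(w,\operatorname{im}\gamma)=O(\delta\log\lambda)$, holds. Concretely, in $\mathbb{H}^2$ take two geodesic rays of length $h\gg\lambda$ issuing from a common apex $T$ at angle $\theta$ with $\sin(\theta/2)=e^{-\lambda}$, and sample each leg at arclength steps of size $\lambda$. The hyperbolic law of cosines gives, for points at arc-distances $u,v$ from $T$ on the two legs, $\cosh d=\cosh(u-v)+2\sin^2(\theta/2)\sinh u\sinh v$, and a short case check shows this V-shaped path is a genuine $(\lambda,0)$-quasi-geodesic for all large $\lambda$; yet $\sinh\bigl(\tfrac{1}{2}d(x_1,x_2)\bigr)=e^{-\lambda}\sinh h$, so the Gromov product $(x_1\mid x_2)_T=h-\operatorname{arcsinh}(e^{-\lambda}\sinh h)\approx\lambda$, placing the apex at distance about $\lambda$ from $\sigma=[x_1,x_2]$. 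Thus $D$ grows at least linearly in $\lambda$ (Shchur's optimality analysis in \cite{SHCHUR2013815} shows the sharp constant is of order $\lambda^2(c+\delta)$ in general). Running your excision on this example exposes exactly the failure you flagged: the endpoints $y,z$ of the excised arc sit at arc-distance about $D/2$ from the apex on the two exponentially hugging legs, so $d(y,z)=O(e^{-\lambda/2})$; the \emph{entire} chord $[y,z]$ then lies within $d(y,z)$ of the point $y\in\beta$, so $\beta$ avoids no ball of radius comparable to $D$ centered anywhere on $[y,z]$, and your proposed repair also fails, since the projection $m$ of $x_0$ to $\sigma$ is at distance about $D/2$ from the chord, not $O(\delta)$. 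The degenerate configuration is not bookkeeping; it is precisely where the extremal examples live.

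The correct route --- and the one in the sources the paper cites, since the paper itself offers no proof but invokes \cite[Section~5.2]{ghys2013groupes} and \cite{SHCHUR2013815} --- reverses the order of the two directions. Apply exponential divergence only to bound $D_0=\sup_{w\in\sigma}d(w,\operatorname{im}\bar\gamma)$: choosing $w\in\sigma$ realizing $D_0$, the tamed path avoids the open ball $B(w,D_0)$ \emph{automatically by maximality} (the center lies on $\sigma$, whose endpoints the whole path joins, so no degenerate chord ever arises); bridging from $\sigma$-points at distance $2D_0$ from $w$ to nearby points of $\bar\gamma$ gives $D_0\le\delta\log_2(O(\Lambda D_0))+1$, whence $D_0=O(\delta\log\lambda)$. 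One then transfers to your direction by connectedness: for a maximal excursion $\bar\gamma|_{[a',b']}$ outside $\overline{N}_{D_0}(\sigma)$, the geodesic $\sigma$ is covered by the $D_0$-neighborhoods of $\bar\gamma([a,a'])$ and $\bar\gamma([b',b])$, so some $w\in\sigma$ is $D_0$-close to points on both sides, and your (correct) tameness bound $\ell\le\Lambda d+\Lambda$ with $\Lambda=O(\lambda^2)$ caps the excursion, yielding $D=O(\Lambda D_0)=O(\lambda^2\delta\log\lambda)$ --- so your instinct that the quadratic originates in the tameness constant is right, but it enters at this transfer step, never in a divergence estimate applied to the excised arc. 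To remove the residual $\log\lambda$ and get the genuinely quadratic $Q_1(\lambda)$ asserted in the lemma, one needs the sharper analysis of \cite{SHCHUR2013815}, which is exactly why the paper quotes that result rather than re-deriving it.
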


We continue with some notation. Let $(G,S)\curvearrowright X$ be a group $G$ with a finite generating set $S$ acting acylindrically on a $\delta$-hyperbolic space $X$. Fix an origin $o\in X$. Let $g\in G$ be a loxodromic element. Let $\tau(g) = \lim_n \frac{d(g^no,o)}{n}$ denote the stable length of $g$. We observe that $\tau(g)$ is independent of the choice of base point and is stable under conjugation. Let $[g]$ denote the displacement (or translation length) of $g$, defined by\[[g]=\inf_{x\in X}d(gx,x).\]
This infimum is in fact achieved at some point $x_0\in X$ (see the discussion after Definition 5 in \cite{delzant}). We let $L_g = \cup_{n\in\bZ}[g^nx_0,g^{n+1}x_0]$. Let $\gamma = \cup_{n\in\bZ}[g^no,g^{n+1}o]$ be the axis of $g$ going through $o.$ Let $\lambda = \frac{d(go,o)}{\tau(g)}$ so that $\gamma$ is a $(\lambda,0)$-quasi-axis; i.e.,  the length of any sub-segment of $\gamma$ is at most $\lambda$ times the distance between the endpoints. That is, if $m < n$ are integers then $$\sum_{i=m}^{n-1}d(g^io,g^{i+1}o) \leq = \lambda d(g^{n-m}o,o).$$ Note that clearly $[g] \leq d(go,o) = \tau(g)\lambda.$ Let $E(g)$ denote the maximal elementary (i.e., virtually cyclic) subgroup of $G$ containing $g.$ For sets $Y,Z\subset X$, let $\pi_Y(Z)$ denote the projection of $Z$ onto $Y.$ More precisely, $y\in \pi_Y(Z)$ if and only if $y\in Y$ and there exists $z\in Z$ such that $d(y,z) = \min\{d(y',z) : y'\in Y\}$. For a set $Y\subset X$ let $\|Y\|$ denote its diameter; i.e., the smallest real number such that $d(y,y')\leq \|Y\|$ for all $y,y'\in Y.$

The following is due to Delzant. 

\begin{prop}\label{lem:weakly acyl}
    (\cite[Proposition~5]{delzant}) Let $G\curvearrowright X$ be an acylindrical action on a hyperbolic space. Then there exists a constant $D_0$ such that for all loxodromic elements (i.e. hyperbolic isometries) $g\in G$ and for all $h\not\in E(g)$,
    $$\|\pi_{L_g}(hL_g)\|\leq D_0(1+[g]). $$
\end{prop}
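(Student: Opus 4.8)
The plan is to argue by contradiction: I assume that $\|\pi_{L_g}(hL_g)\|$ is much larger than $D_0(1+[g])$ for a constant $D_0$ depending only on the action $G\curvearrowright X$, to be fixed at the end, and manufacture a contradiction with acylindricity. The factor $(1+[g])$ should emerge naturally because each ``fundamental domain'' of the $\langle g\rangle$-action along its axis has size comparable to $[g]$, so a long overlap forces many group elements to witness acylindricity.

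First I translate the projection hypothesis into a fellow-traveling statement. Both $L_g$ and $hL_g$ are $(\lambda',0)$-quasi-geodesics with $\lambda' = [g]/\tau(g)$, since each edge $[g^nx_0,g^{n+1}x_0]$ has length exactly $[g]$ while its endpoints are at distance $\approx n\tau(g)$. If $p,q\in\pi_{L_g}(hL_g)$ realize the diameter, coming from $p',q'\in hL_g$, then the Quantitative Morse Lemma (Lemma \ref{lem:quantmorse}) places the sub-segment of $L_g$ between $p$ and $q$ in a $\nu$-neighborhood of $hL_g$, where $\nu$ depends only on $\delta$ and $\lambda'$. Thus $L_g$ and $hL_g$ coarsely coincide along a segment of length comparable to $\|\pi_{L_g}(hL_g)\|$.

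Next I parametrize the overlap by the orbit vertices $P_i = g^i x_0$ on $L_g$ and $Q_j = hg^j x_0$ on $hL_g$. Since displacement is a conjugacy invariant, $hgh^{-1}$ translates $hL_g$ by the same amount $[g]$; hence along the overlap each $P_i$ lies within $\nu$ of some $Q_{\sigma(i)}$ with $\sigma$ an approximate shift, $\sigma(i)=i+c_0+O(1)$. The key construction is the family $r_i := g^{-i}hg^{\sigma(i)}$, indexed over the overlap. Using that $g^{\pm i}$ and $hgh^{-1}$ are isometries, one checks that each $r_i$ moves the pair $u=x_0$ and $v=g^M x_0$ (for a fixed $M$) by at most $2\nu$, provided both $i$ and $i+M$ index points still inside the overlap. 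Choosing $M\approx R_\epsilon/[g]$ so that $d(u,v)\geq R_\epsilon$, the admissible range of $i$ has size $\approx (\text{overlap length})/[g] - M$, which is precisely where the $(1+[g])$ factor enters. Fixing $\epsilon\geq 2\nu$, acylindricity yields $R_\epsilon,N_\epsilon$ so that at most $N_\epsilon$ elements move both $u$ and $v$ by at most $\epsilon$; therefore $\{r_i\}$ has at most $N_\epsilon$ distinct members, forcing the overlap, and hence $\|\pi_{L_g}(hL_g)\|$, to be at most $O(N_\epsilon[g]+R_\epsilon)\leq D_0(1+[g])$. Distinctness is where the hypothesis $h\notin E(g)$ is used: a collision $r_i=r_{i'}$ with $|i-i'|$ large gives $h^{-1}g^{i'-i}h = g^{\sigma(i')-\sigma(i)}$ with both exponents nonzero, so $h^{-1}E(g)h\cap E(g)$ is infinite, and almost malnormality of $E(g)$ forces $h\in E(g)$, a contradiction.

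The main obstacle I anticipate is uniformity of the constants over all loxodromic $g$: the Morse constant $\nu$, and hence the entire estimate, depends on the quasi-geodesic constant $\lambda'=[g]/\tau(g)$, which a priori varies with $g$. Controlling this dependence---either via a uniform comparison of $[g]$ and $\tau(g)$ for loxodromics of an acylindrical action, or by running the fellow-traveling step in a manner insensitive to $\lambda'$---is the delicate part, alongside making the approximate shift $\sigma$ and the ``collision implies $h\in E(g)$'' dichotomy quantitatively precise.
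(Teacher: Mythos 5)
First, a point of comparison: the paper does not prove this proposition at all --- it is quoted verbatim from Delzant \cite[Proposition~5]{delzant}, so your attempt is a from-scratch reconstruction. Your strategy (large projection $\Rightarrow$ long fellow-traveling of $L_g$ and $hL_g$ $\Rightarrow$ a family of elements $r_i = g^{-i}hg^{\sigma(i)}$ moving two far-apart points a bounded amount $\Rightarrow$ acylindricity caps their number $\Rightarrow$ a collision gives $h^{-1}g^kh = g^{\pm k}$, whence $h\in E(g)$) is indeed the standard route to such bounded-projection statements, and the collision step is sound: comparing stable lengths forces $|k|=|k'|$, and the characterization of $E(g)$ (or almost malnormality of hyperbolically embedded $E(g)$, as you invoke) then applies.

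However, there is a genuine gap, and it sits exactly at the point you flag as ``delicate'': the uniformity of the Morse constant $\nu$ over all loxodromic $g$. Your quasi-geodesic constant is $\lambda' = [g]/\tau(g)$, and the entire point of the proposition is that $D_0$ is independent of $g$; the paper relies on this, since the $\lambda$-dependence for the orbit axis $\gamma$ is handled separately afterwards in Lemma \ref{lem:bdd-proj}. Naming the problem is not solving it: you need two nontrivial inputs, namely the hyperbolic-geometry inequality $[g]\le \tau(g)+16\delta$ (so $\lambda' \le 1 + 16\delta/\tau(g)$) together with Bowditch's theorem that acylindrical actions have a uniform positive lower bound $\tau(g)\ge \tau_0>0$ on stable lengths of loxodromics; combined these make $\lambda'$, hence $\nu$, hence $\epsilon, R_\epsilon, N_\epsilon$, uniform. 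Without some such input, $D_0$ depends on $g$ and the statement is not proved. A second, related soft spot: your claim that each $r_i$ moves $v=g^Mx_0$ by at most $2\nu$ is asserted, not checked, and as set up via the approximate shift $\sigma(i)=i+c_0+O(1)$ the error is $\nu + O(1)\cdot[g]$ (each shift error costs a fundamental domain of size $[g]$), which would force $\epsilon$ --- and therefore $R_\epsilon, N_\epsilon$ --- to depend on $g$, collapsing the uniform bound $D_0(1+[g])$. This is fixable, but only by a different argument: project $P_i, P_{i+M}, Q_{j(i)}, Q_{j(i)+M}$ to the common geodesic they all $\nu$-fellow-travel and use that nearest-point projection coarsely preserves distances for points near a geodesic, together with $d(Q_j,Q_{j+M}) = d(P_i,P_{i+M})$ (both equal $d(x_0,g^Mx_0)$, since $hgh^{-1}$ is conjugate to $g$); this yields a movement bound $O(\nu+\delta)$ uniform in $M$ and $[g]$. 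With those two repairs (plus minor bookkeeping: choose $M=\lceil R_\epsilon/\tau(g)\rceil$ rather than $R_\epsilon/[g]$, and handle the orientation-reversing fellow-traveling case), your outline does go through.
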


The following lemma is the quantitative version of \cite[Lemma~2.23]{wan2024uniformexponentialgrowthgroups}.
\begin{lem}[Quantitative Bounded Projection]\label{lem:bdd-proj}
    Let $G\curvearrowright X$ be an acylindrical action on a hyperbolic space. Then there is a quadratic $Q_2$ such that whenever a loxodromic element $g$ and a point $x\in X$ gives a $(\lambda,0)$-quasi-axis $\gamma = \cup_{n\in\bZ}[g^nx,g^{n+1}x]$ and $h\not\in E(g),$ then 
    $$\|\pi_\gamma(h\gamma)\|\leq Q_2(\lambda)(1+[g]).$$
\end{lem}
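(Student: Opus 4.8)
The plan is to transfer Delzant's bound (Proposition \ref{lem:weakly acyl}), which is stated for the geodesic axis $L_g$ through the minimal displacement point $x_0$, to the quasi-axis $\gamma$ through $x$, paying an error that is controlled by the quasi-geodesic constant $\lambda$. First I would observe that $L_g$ is itself a $(\lambda,0)$-quasi-axis. Its consecutive segments $[g^nx_0,g^{n+1}x_0]$ each have length $[g]$, and since $d(g^{k}x_0,x_0)\ge k\tau(g)$ (subadditivity of $k\mapsto d(g^kx_0,x_0)$ together with the definition of $\tau(g)$) while $[g]\le d(gx,x)\le \lambda\tau(g)$ (the latter because $\gamma$ is a $(\lambda,0)$-quasi-axis), the subsegment of $L_g$ from $g^mx_0$ to $g^nx_0$ has length $(n-m)[g]\le \lambda(n-m)\tau(g)\le \lambda\, d(g^{n-m}x_0,x_0)$. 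Thus both $\gamma$ and $L_g$ are bi-infinite $(\lambda,0)$-quasi-geodesics limiting to the same pair of fixed points $g^{\pm\infty}\in\partial X$.

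Next I would fellow-travel the two axes against a genuine geodesic. Let $\sigma$ be a bi-infinite geodesic joining $g^{-\infty}$ to $g^{+\infty}$. Applying the Quantitative Morse Lemma (Lemma \ref{lem:quantmorse}) to finite subsegments shows that each of $\gamma$ and $L_g$ lies within the $Q_1(\lambda)$-neighborhood of $\sigma$, and conversely $\sigma$ lies in a comparable neighborhood of each; hence $d_{\mathrm{Haus}}(\gamma,L_g)\le H(\lambda)$ for some $H$ quadratic in $\lambda$ (absorbing the $O(\delta)$ terms). Since $h$ acts by isometries, the same bound holds for the translates: $d_{\mathrm{Haus}}(h\gamma,hL_g)\le H(\lambda)$.

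I would then compare the two projections by routing both through $\sigma$. Nearest-point projection onto a genuine geodesic in a $\delta$-hyperbolic space is coarsely well-defined (the projection of a point has diameter $O(\delta)$) and coarsely $1$-Lipschitz (it increases distances by at most an additive $O(\delta)$). Using that $\gamma$ and $L_g$ are within $Q_1(\lambda)$ of $\sigma$, and that $h\gamma$ and $hL_g$ are within $H(\lambda)$ of each other, these two facts let me pass in turn from $\|\pi_\gamma(h\gamma)\|$ to $\|\pi_\sigma(h\gamma)\|$, then to $\|\pi_\sigma(hL_g)\|$, and finally to $\|\pi_{L_g}(hL_g)\|$, each step costing only an additive term that is at most a quadratic $R(\lambda)$. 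Combining with Proposition \ref{lem:weakly acyl}, which gives $\|\pi_{L_g}(hL_g)\|\le D_0(1+[g])$ for a universal constant $D_0$, yields $\|\pi_\gamma(h\gamma)\|\le D_0(1+[g])+R(\lambda)$. Since $1+[g]\ge 1$, the additive term is absorbed by setting $Q_2(\lambda)=D_0+R(\lambda)$, a quadratic, which gives the claimed inequality.

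The main obstacle is the transfer step: because nearest-point projection onto a quasi-geodesic is only coarsely defined, I must track carefully how the projection set moves both when the target axis $\gamma$ is replaced by the Hausdorff-close geodesic $\sigma$ and when the projected set $h\gamma$ is replaced by $hL_g$. The substance of the lemma is precisely that all of these errors stay polynomial — in fact quadratic — in $\lambda$ rather than degenerating, which is exactly what the quantitative form of the Morse lemma (Lemma \ref{lem:quantmorse}), with its quadratic $Q_1$, is designed to guarantee.
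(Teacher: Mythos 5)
Your proposal is correct and takes essentially the same approach as the paper: the paper's own proof is a two-line version of exactly this argument, observing that $\gamma$ and $L_g$ converge to the same points of the Gromov boundary and invoking the quantitative Morse lemma (Lemma \ref{lem:quantmorse}) to transfer the bound of Proposition \ref{lem:weakly acyl} from $L_g$ to $\gamma$ at a cost quadratic in $\lambda$. Your write-up merely makes explicit the fellow-traveling and coarse projection estimates that the paper leaves implicit.
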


\begin{proof}
    The quasi-axes $\gamma$ and $L_g$ both converge to the same points on the Gromov boundary of $X$. We may, therefore, apply the quantitative Morse lemma to transfer the inequality from Proposition \ref{lem:weakly acyl} to $\gamma$, up to a quadratic in $\lambda$.
\end{proof}

The following theorem, the main theorem of this section, is essentially a reproduction of \cite[Corollary~3.4]{YANG_2014} but where we have tracked all of the constants and dependencies to verify they grow polynomially.

\begin{thm}[Quantitative Admissible Path Theorem]\label{thm:quantitative admissible path}
    Let $(G,S)\curvearrowright X$ be a group $G$ with a finite generating set $S$ acting acylindrically on a $\delta$-hyperbolic space $X$. Fix a constant $C>0.$ There is a degree 6 polynomial $P$ (depending on $G,S,X,\delta,C$) such that if $g$ is loxodromic with $\tau(g) = C$, then for all group elements $h_1,\ldots,h_m\not\in E(g)$ such that $|h_i|_S \leq |g|_S$, the product $h_1g^{n_1}h_2g^{n_2}\cdots h_mg^{n_m} \neq e$ whenever $|n_i| \geq P(|g|_S)$ for all $1\leq i\leq m$ (with $n_m$ also being allowed to equal 0).
\end{thm}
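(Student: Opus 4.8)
The plan is to argue geometrically: to show that $w = h_1 g^{n_1}\cdots h_m g^{n_m}$ is nontrivial, I would show that the orbit point $wo$ is far from the basepoint $o$ by exhibiting an explicit quasi-geodesic from $o$ to $wo$. Writing $v_i = h_1 g^{n_1}\cdots h_{i-1}g^{n_{i-1}}h_i$ and $u_i = v_i g^{n_i}$ (so $u_0 = e$ and $u_m = w$), I would build the concatenated path $P$ that alternates \emph{connecting pieces} $c_i = u_{i-1}[o,h_i o]$ with \emph{axis pieces} $a_i = v_i[o, g^{n_i}o]$. Each $a_i$ is a geodesic running along the translate $v_i\gamma$ of the $(\lambda,0)$-quasi-axis $\gamma$ of $g$, which is precisely the setup of an admissible path in the sense of Yang, with contracting system the orbit $\{v\gamma : v\in G\}$.

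First I would record all geometric quantities in terms of $|g|_S$. Setting $\Delta=\max_{s\in S}d(o,so)$, the connecting pieces satisfy $\operatorname{length}(c_i)=d(o,h_i o)\le|h_i|_S\Delta\le|g|_S\Delta$, which is linear in $|g|_S$. Since $\tau(g)=C$ is fixed, both $\lambda=d(go,o)/\tau(g)\le|g|_S\Delta/C$ and $[g]\le d(go,o)=\lambda\tau(g)\le|g|_S\Delta$ are linear in $|g|_S$. Using that $\gamma$ is a $(\lambda,0)$-quasi-axis, the length estimate $|n_i|\lambda\tau(g)=|n_i|\,d(o,go)\le\lambda\,d(o,g^{n_i}o)$ gives $\operatorname{length}(a_i)=d(o,g^{n_i}o)\ge|n_i|\tau(g)=|n_i|C$, so the axis pieces are long exactly when the $|n_i|$ are large.

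The heart of the matter is the bounded-projection estimate needed for admissibility. Translating by $v_i^{-1}$ and using that $g^{n_i}$ preserves $\gamma$ setwise, I get $\|\pi_{v_i\gamma}(v_{i+1}\gamma)\|=\|\pi_\gamma(g^{n_i}h_{i+1}\gamma)\|=\|\pi_\gamma(h_{i+1}\gamma)\|$; since $h_{i+1}\notin E(g)$, Lemma \ref{lem:bdd-proj} bounds this by $Q_2(\lambda)(1+[g])$, which is cubic in $|g|_S$. Combined with the linear lengths of the connecting pieces, this yields a projection constant $\epsilon$ that is cubic in $|g|_S$. I would then feed $\epsilon$, together with the quadratic quantitative Morse constant $Q_1(\lambda)$ of Lemma \ref{lem:quantmorse} (needed because $\gamma$ is only a quasi-axis and must be compared with the true geodesic along $v_i\gamma$), into a quantitative version of Yang's admissibility-implies-quasi-geodesic criterion, tracking all constants as in \cite[Corollary~3.4]{YANG_2014}: if every $a_i$ exceeds a threshold $R$ that is polynomial in $\epsilon$, $Q_1(\lambda)$ and $\lambda$, then $P$ is a quasi-geodesic, so its endpoints $o$ and $wo$ are distinct and $w\ne e$. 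Since $\operatorname{length}(a_i)\ge|n_i|C$, the requirement $\operatorname{length}(a_i)\ge R$ becomes $|n_i|\ge R/C=:P(|g|_S)$. The degenerate case $n_m=0$ is harmless: if $m\ge2$ the earlier axis pieces already force $P$ to be long, and if $m=1,\,n_1=0$ then $w=h_1\notin E(g)\ni e$ is nontrivial directly.

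The main obstacle is precisely the quantitative bookkeeping in this last step. As $|g|_S\to\infty$ the quasi-axis constant $\lambda$ grows linearly, so the contraction/Morse behaviour of $\gamma$ degrades, and Yang's argument must be re-run with every constant made explicit and shown to grow only polynomially. The delicate point is verifying that the threshold length $R$ accumulates to degree $6$ — roughly the cubic projection bound, the quadratic Morse constant, and the linear quasi-axis constant — and no higher, which is exactly what pins down $\deg P=6$. The secondary point requiring care is confirming that the short (linear) connecting pieces never produce enough backtracking to spoil the quasi-geodesic, i.e.\ that they are uniformly dominated by the long (degree-$6$) axis pieces.
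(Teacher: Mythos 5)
Your proposal follows essentially the same route as the paper: the same alternating path of short connecting geodesics and long pieces along translates of the $(\lambda,0)$-quasi-axis, the same contracting-system setup from Yang with the Delzant-based bounded-projection bound $Q_2(\lambda)(1+[g])=O(|g|_S^3)$ and the quantitative Morse lemma, and the same conclusion by re-running Yang's admissible-path criterion \cite[Corollary~3.4]{YANG_2014} with all constants tracked polynomially to land at degree $6$. The only cosmetic differences are that the paper takes the axis pieces to be the $(\lambda,0)$-quasi-geodesic segments of the quasi-axis itself rather than geodesics compared to it via Morse, and handles $n_m=0$ by noting Yang's corollary applies directly rather than by your separate case analysis; both choices are sound.
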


\begin{proof}
     Set $\lambda = \frac{d(go,o)}{\tau(g)}$. Let $\cL$ be the collection of $(\lambda,0)$-quasi-geodesics in $X$; in particular, $\cL$ includes all geodesics since $\lambda \ge 1$. Let $\bX$ be defined by $\bX = \{\cup_{n=k}^{\ell}[hg^no,hg^{n+1}o] : h\not\in E(g)\}$ and note that $\bX\subset \cL.$ By Lemma \ref{lem:quantmorse} and the proof of \cite[Corollary~3.4]{bestvina2009characterization}, we have that $\bX$ is $(\mu,\ee)$-contracting with respect to $\cL$ where $\mu,\ee$ are uniformly bounded above by a quadratic $Q_3(\lambda)$. That is, whenever $q\in \cL$ and $p\in\bX$ such that $d(q,p) \geq Q_3(\lambda)$ then $\|\pi_p(q)\| < Q_3(\lambda)$. (In the notation of \cite{YANG_2014}, $\mu(\lambda',c'),\ee(\lambda',c') \le Q_3(\lambda)$ for all $\lambda',c' \in \bR$.) 

    We now obtain that for each $p\in\bX$, $p$ is $\sigma$-quasiconvex where $\sigma(U) = \frac{3}{2}Q_3(\lambda) + 2U$. Indeed, let $q$ be a geodesic in $X$ such that its endpoints $q^+$ and $q^-$ are in the $U$-neighborhood $N_U(p)$ of $p.$ Since $p$ is $(\mu,\ee)$-contracting, either $d(q,p) < \mu$ (and we are done) or $\|\pi_p(q)\| < \ee$. In particular, we have $d(q^+,q^-) \le \ee + 2U$. Since $q$ is a geodesic, for any $x\in q$ we have $d(q^+,x) + d(q^-,x) = d(q^+,q^-)$. 
    Therefore without loss of generality $d(q^+,x) \le \ee/2 + U$, implying that $d(x,\pi_p(x)) < 2U + \frac{3}{2}\ee$. Thus $q\subset N_{\sigma(U)}(p)$.

    By Lemma \ref{lem:bdd-proj}, if $p,p'\in\bX$ are quasi-geodesics corresponding to $h,h'$ in different left cosets of $E(g)$, then $p,p'$ have bounded projection $T$ where $T = Q_2(\lambda)(1+[g]) = O(\lambda^3)$ since $[g] \leq d(go,o) = \tau(g)\lambda$ (and the stable length $\tau(g)$ is fixed). We note that in \cite{YANG_2014}, the symbol $\tau$ is used for bounded projection but we have already used $\tau$ for stable length. By \cite[Lemma~2.7]{YANG_2014}, $p,p'$ also have $\nu$-bounded intersection where $\nu(U) = 2U + O(\lambda^3)$.
    \allowdisplaybreaks

    Now let $n_1,\ldots,n_k$ be integers and consider the path 
    \begin{align*}
        P &=[o,h_1o]\cup[h_1o,h_1go]\cup \ldots 
    \cup[h_1g^{n_1-1}o,h_1g^{n_1}o]\cup[h_1g^{n_1}o,h_1g^{n_1}h_2o]\cup\ldots \\
    &\ldots\cup [h_1g^{n_1}\cdots h_kg^{n_k-1}o,h_1g^{n_1}\cdots h_mg^{n_m}o];
    \end{align*} set $q_i$ to be the geodesic subsegment of $P$ where $h_i$ first appears and set $p_i$ to be the subpath of $P$ between $q_i$ and $q_{i+1}$ (with $p_m$ being the subpath of $P$ coming after $q_m$). More explicitly, for $i= 1,2$ we have that $q_i$ and $p_i$ are as follows:
    \begin{align*}
        q_1 &= [o,h_1o],\\
        q_2 &= [h_1g^{n_1}o,h_1g^{n_1}gh_2o],\\
        p_1 &= [h_1o,h_1g^{\sgn(n_1)}o]\cup\ldots\cup[h_1g^{n_1-\sgn(n_1)}o,h_1g^{n_1}o],\\
        p_2 &= [h_1g^{n_1}h_2o,h_1g^{n_1}h_2g^{\sgn(n_2)}o]\cup\ldots\cup[h_1g^{n_1}h_2g^{n_2-\sgn(n_2)}o,h_1g^{n_1}h_2g^{n_2}o]\\
    \end{align*} $P$ is a path from $o$ to $h_1g^{n_1}h_2g^{n_2}\cdots h_mg^{n_m}o $ in $X$. We note that each $p_i$ is a $(\lambda,0)$-quasi-geodesic in $\bX$ (and thus has two endpoints in some $p'\in\bX$), each $p_i$ has length at least $\tau(g)|n_i|,$ each $q_i$ has $C'|g|_S$-bounded projection to $p_i$ and $p_{i-1}$ (since $q_i$ is a geodesic of length at most $C'|h|_S\leq C'|g|_S$ where $C' = \max_{k\in S}d(ko,o)$; note that $C'$ is a constant dependent only on the group action and the choice of generating set), and any two $p_i,p_{i+1}$ have $\nu$-bounded intersection since each $h_i\not\in E(g)$ so $p_i,p_{i+1}$ lie in different quasi-axes of $g.$ We note that $\lambda = \tau(g)d(go,o) \leq \tau(g)C'|g|_S$ so that $\lambda = O(|g|_S)$.
 
    We are now ready to analyze the constants in the proof of \cite[Proposition~3.3]{YANG_2014}. We have $c=0$ everywhere.
    \begin{align*}
        C_{\lambda,0} &= \lambda (\mu(\lambda,0) + \epsilon(\lambda,0) + C'|g|_S)\\ 
        &= O(|g|_S^3) \\
        B_{\lambda,0} &= 2\ee(1,0) + 2\mu(1,0) + \nu(\mu(1,0) + \sigma(0)) + C'|g|_S \\
        &= O(|g|^2) \\
        R &= \max(C'|g|_S + 2\ee(1,0) + 4\mu(1,0) + 1,\\
        &\quad\quad\quad \ \  \mu(1,0) + 5\ee(1,0) + B_{\lambda,0} + 1)\\
        &= O(|g|_S^2)\\
        \Lambda &= \lambda(6R + 1) \\
        &= O(|g|_S^3)\\
        D &= \max(\mu(1,0) + \ee(1,0) + C'|g|_S + C_{\lambda,0},\\
        &\quad\quad\quad \ \ 2C'|g|_S + 3\ee(1,0) + 6\mu(1,0),\\
        &\quad\quad\quad \ \ \Lambda(R + \sigma(\mu(1,0))), \\
        &\quad\quad\quad \ \ 13\ee(1,0) + 6\mu(1,0) + 2B_{\lambda,0}) \\
        &= O(|g|_S^5).
    \end{align*}
    Now, if we choose all of the $n_i$ so that $|n_i|$ are larger than $\lambda D = O(|g|_S^6),$ we can apply \cite[Corollary~3.4]{YANG_2014} to obtain that our path $P$ is a $(\Lambda,0)$-quasi-geodesic. In particular, $P$ is nontrivial and so it must be that $h_1g^{n_1}h_2g^{n_2}\cdots h_mg^{n_m} \neq e$. (Note that in the case $n_m=0$, we can still apply \cite[Corollary~3.4]{YANG_2014}.) This finishes the proof.
\end{proof}

\bibliographystyle{plain}
\bibliography{comparison}
\end{document}